\documentclass[11pt,oneside]{amsart}
\usepackage{amscd,amsmath,amssymb,amsfonts}
\usepackage[all]{xy}
\usepackage{hyperref}
\usepackage{url}
\usepackage{stmaryrd}

\usepackage{soul}

\usepackage[a4paper, total={6in, 8in}]{geometry}

\usepackage{xcolor}

\usepackage{upgreek}

\theoremstyle{plain}
\newtheorem{thm}{Theorem}
\newtheorem{lem}[thm]{Lemma}
\newtheorem{cor}[thm]{Corollary}
\newtheorem{prop}[thm]{Proposition}

\theoremstyle{definition}
\newtheorem{ex}[thm]{Example}
\newtheorem{defn}[thm]{Definition}

\newtheorem{fact}[thm]{Fact}
\newtheorem{rmk}[thm]{Remark}
\newtheorem{rmks}[thm]{Remarks}
\numberwithin{thm}{section}
\numberwithin{equation}{section}

\newcommand{\ga}[2]{\begin{gather}\label{#1}#2 \end{gather}}

\newcommand{\sE}{{\mathsf E}}

\newcommand{\sO}{{\mathcal O}}

\newcommand{\C}{{\mathbb C}}

\newcommand{\F}{{\mathbb F}}
\newcommand{\G}{{\mathbb G}}

\renewcommand{\P}{{\mathbb P}}
\newcommand{\Q}{{\mathbb Q}}

\newcommand{\Z}{{\mathbb Z}}

\newcommand{\Pb}{{\mathbb{P}}}
\newcommand{\Cb}{{\mathbb{C}}}
\newcommand{\Zb}{{\mathbb{Z}}}
\newcommand{\Lb}{{\mathbb{L}}}
\newcommand{\Qb}{{\mathbb{Q}}}

\begin{document}

\title{The Grothendieck-Katz Conjecture for privileged local systems}
\author{H\'el\`ene Esnault \and Michael Groechenig}
\address{Freie Universit\"at Berlin, Berlin,  Germany}
\email{esnault@math.fu-berlin.de }
\address{Department of Mathematics, University of Toronto}
\email{michael.groechenig@utoronto.ca}

\maketitle

\begin{abstract}
We define  study privileged local systems  on a smooth  quasi-projective complex  variety $X$ with fixed quasi-unipotent monodromies around a boundary divisor.   The notion is a generalization of Katz' physical rigidity on an open of $\mathbb P^1$. It is defined in any dimension and 
 includes non-rigid local systems.   In the latter case,  Katz used the \emph{middle convolution} procedure to classify all physically local systems and derived several consequences such as the $p$-curvature conjecture for local systems of this type. We present a new proof, which avoids the use of middle convolution. This yields examples in higher dimension of  local systems which verify the $p$-curvature conjecture. 
\end{abstract}

\medskip

\section{Introduction}\label{sec:intro}

Let $X$ be a dense open subset of the complex projective line.  In \cite{Kat96}  Katz studies irreducible 
local systems which are rigid. He shows that they are uniquely determined by their monodromies at infinity (which he calls the ``physical rigidity''  property), he classifies them entirely. They are coming from characters via the six functor formalism and Fourier transforms (to paraphrase in modern language), and when the monodromies at infinity are quasi-unipotent, this character is torsion. In this case they are of geometric origin, that is subquotients of a Gauss-Manin system stemming from a family of smooth projective varieties. A generalization of this result for $G$-local systems was recently given in \cite{Fae24}, using methods from the geometric Langlands programme.
For Gauss-Manin systems, Katz in \cite{Kat82} proved the $p$-curvature  conjecture: if the Gauss-Manin connection  of a spreading-out has $p$-curvature equal to zero (that is if it is spanned by flat sections) for almost all $p$, then its monodromy is finite. 
His proof is purely geometric. He shows that the $p$-curvature condition implies that the Hodge filtration of the Gauss-Manin connection is stabilized by the connection, thus the connection is unitary, thus finite as the local system is defined over $\mathbb Z$.
 It is an open question if arbitrary subquotients of a Gauss-Manin connections satisfy the $p$-curvature conjecture.

 We now drop the assumption $X \subset \Pb^1_{\Cb}$.  We consider $X$ to be a smooth quasi-projective complex variety of arbitrary dimension $d$ and denote by $\bar{X}$ a projective completion, which is itself smooth, such that the boundary $\bar{X}\setminus X =D$ is a simple normal crossings divisor with components $D=\bigcup_{i \in I} D_i$. For $i \in I$, we denote by $\gamma_i \in \pi_1(X,x)$ a loop around $D_i$, which is well-defined up to conjugation. We fix an $I$-tuple of conjugacy classes $[R_i] \subset {\mathsf{GL}}_r(\Cb)$.
\begin{defn}
We denote by $M_B(X,r, [R_i])$ the moduli space of irreducible local systems of the fundamental group, defined by a representation  $\rho\colon \pi_1(X,x) \to {\mathsf{GL}}_r(\Cb)$, such that $\rho(\gamma_i) \in [R_i]$. An isolated point $[\rho] \in M_B(X,r,[R_i])$ is called a \emph{rigid local system}. One calls $[\rho]$ \emph{cohomologically rigid} (also known as \emph{infinitesimally rigid}) if it is a reduced and isolated point, which is equivalent  to the vanishing of the tangent space and thereby expressible as a cohomological condition.
\end{defn}

While rigid local systems $X$ are conjectured by Simpson to  be of \emph{geometric origin} (\cite{Sim92}), this problem remains open. However, various consequences of this conjectural property are unconditionally proven.   The rigid connections on $X$ smooth  quasi-projective  are defined over a number ring if they are cohomologically rigid (\cite{EG18}).  Only assuming rigidity, 
one can define $F$-isocrystals on mod $p$ reduction of $X$ (see \cite{EG25} and its precursors \cite{EG18,EG21}).

In \cite[Section~8.4]{EG20}, the authors made partial progress on the $p$-curvature conjecture for \emph{rigid} flat connections  for $X$ projective, by establishing that rigidity paired with the assumption of the $p$-curvature conjecture implies that the monodromy is contained within a compact subgroup. Under the additional assumption of cohomological rigidity one also obtains integrality and thus the assertion that the monodromy factors through a conjugate of ${\mathsf{GL}}_r(\mathcal{O}_K)$, where $K$ is a number field. However, this only allows one to infer the finiteness of monodromy conjectured by the $p$-curvature conjecture, if all the Galois conjugates of the local system can be shown to have unitary monodromy as well. We remark that there are examples of Gauss-Manin connections for which one irreducible summand is unitary whereas another one is not.

The aim of this note it to address this  precise issue for so-called {\it privileged local systems}, a notion which we introduce in Definition~\ref{defn:phys_rig}
in any dimension and which encompasses  Katz's \emph{physical rigidity}, thus rigidity on a rational curve.  
The decisive property of privileged local systems underlying our argument is that they are completely controlled by the conjugacy classes $[R_i]$ describing the monodromy around the components of the boundary divisors.
 This leads to the insight (see
Theorem~\ref{thm:arith_frob}) that a $p$-Frobenius lift in the Galois group of the field of definition of a privileged local system   
acts as the crystalline  Frobenius 
of the crystal defined by the underlying connection mod $p$. To speak figuratively, our key observation is that the  {\it  the Frobenius on the Betti coefficients equals the crystalline Frobenius.}

 Equipped with this observation we can prove the main Theorem~\ref{thm:main}  of our article: {\it privileged local systems verify the $p$-curvature conjecture.} For the proof, we have to eliminate the poles of the connection so as to be able to apply Simpson's theory culminating in the identification of unitary local systems on $X$ projective with $\mathbb C^\times$-fixed Higgs bundles with underlying stable vector bundles, also known as the Narasimhan--Seshadri correspondence. While the idea developed in \cite[Section~8.4]{EG20} to prove unitarity was to find good $p$'s which yield an $F$-crystal, here  rather the Frobenius is exchanging the finitely many crystals associated to the privileged local systems in a given rank, much in the spirit of our counting argument in the proof of the integrality in \cite{EG18} and in the proof of the existence of $p$-to-$p$ companions for rigid local systems when $X$ is projective in \cite[Section~7.3]{EG20}. Indeed, we also prove in Proposition~\ref{prop:comp} the existence of $p$-to-$p$ companions for privileged local systems under the simplifying assumption that the matrices $R_i$ are finite.

 Our main theorem has various applications. Isomonodromic deformations of privileged local systems exist and verify the $p$-curvature conjecture, see Theorem~\ref{thm:gk}. Moreover,  subquotients of connections yielding  $F$-crystals  when the local system is defined over $\mathbb Z$ also verify the $p$-curvature conjecture, see Proposition~\ref{prop:V}. This applies   to summands of Gauss-Manin connections when they are privileged with respect to the other summands, see Theorem~\ref{thm:subquotient}   \\  \\
{\it Acknowledgements:} It has been a great privilege to devote ourselves to the fascinating circle of ideas of rigid local systems. Our intellectual debt to work by Katz and Simpson will be evident to all readers. Furthermore, we are grateful to  everyone with whom we had the pleasure to  engage in discussions 
on topics addressed in this note, both past and present.  This includes Yohan Brunebarbe, François Charles, Dustin Clausen, Joakim F\ae rgeman, Daniel Litt, Alexander Petrov, Peter Scholze, Ananth Shankar, Emanuel Ullmo. The first author thanks the IH\'ES for its warm hospitality during the preparation of this work.

\section{Privileged local systems} \label{sec:phy_rig}
Let $X$ be a smooth quasi-projective variety of dimension $d$ defined over the field of complex numbers. Let $j \colon X\hookrightarrow \bar X$ be a good compactification, i.e., $\bar X$ is a smooth projective variety and $\bar X\setminus X=: D=\cup_{i\in I} D_i$ is a strict normal crossings divisor. Let $\mathbb L$ be an {\it irreducible} local system on $X$ of rank $r$. Let  $[R_i]\subset {\rm GL}_r(\C)$ be the conjugacy classes of its monodromies along the $D_i, \ i\in I$ and $\delta$ be its determinant character. 

\subsection{Classical notions of rigidity} \label{ss:rig}
When   $\bar{X} = \Pb^1_{\Cb}$,  Katz  in \cite[1.1]{Kat96} defined $\mathbb L$ to be  {\it physically rigid}  if any other  irreducible local system with the same  conjugacy classes of its monodromies along the $D_i, \ i\in I$ is isomorphic to $\mathbb L$.  He proved \cite[Theorem~1.1.2]{Kat96} that this condition is cohomological and equivalent to the coarse Betti moduli space $M_B(X,r, [R_i])$ of irreducible complex local systems of rank $r$ with $[R_i]$ monodromies along the $D_i$
being reduced to a smooth point. Said differently, an irreducible local system $\mathbb L$ on $\Pb^1_{\Cb} \setminus D$ is  physically rigid 
 if and only if it is {\it cohomologically rigid} (i.e. is a smooth isolated point of $M_B(X,r,[R_i])$) if and only if it is  {\it rigid} (i.e., is an isolated point of $M_B(\Pb^1_{\Cb} \setminus D,r,[R_i])$).  We remark that the topological fundamental group  of $\Pb^1_{\Cb} \setminus D$ is spanned by the  conjugacy classes of small loops  around the $D_i$.  In  this discussion, we could also fix a torsion character $\delta \colon \pi_1(X,x) \to \mu_n\subset \C^\times$ and replace $M_B(X,r, [R_i])$ by $M_B(X,r,[R_i], \delta)$, the coarse moduli space of irreducible local systems of rank $r$ with $[R_i]$ monodromies along the $D_i$ and determinant $\delta$.

Katz's main theorem  \cite[9.3.3]{Kat96} states that rigid local systems with fixed quasi-unipotent monodromies $[R_i]$ along the $D_i$ are of geometric origin; more precisely that they arise through iteratively applying his \emph{middle convolution} procedure to a torsion character. We remark that by Deligne's semi-simplicity Theorem \cite[Section~4.2]{Del71}, their Tannaka group is semi-simple.

The definitions of cohomologically rigid and  of rigid local systems extend readily to higher dimension in a straightforward way, see  \cite[Section~4]{Sim92} and \cite[Section~2]{EG18}. Recall that  by convention all  notions of rigidity require the underlying local system $\mathbb L$ to be {\it irreducible}. 
\begin{lem} \label{lem:det_rigid}
Let $X,D$ be as before (the case $D=\varnothing$ being permitted). If there is an irreducible  local system $\mathbb L$ which is rigid,  then the first Betti number $b_1(\bar X)$ of $\bar X$ is zero. If in addition the monodromies $[R_i]$ along the $D_i$ 
are quasi-unipotent, then ${\rm det}(\mathbb L)$ has finite order.
\end{lem}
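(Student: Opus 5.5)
The plan is to twist $\mathbb L$ by rank one local systems coming from $\bar X$. Let $\chi\colon \pi_1(\bar X,x)\to \C^\times$ be a character. Pull it back along the surjection $\pi_1(X,x)\to\pi_1(\bar X,x)$ and form $\mathbb L\otimes\chi$. This local system is still irreducible. Its local monodromies around the $D_i$ are unchanged, since each $\gamma_i$ dies in $\pi_1(\bar X,x)$. So $\chi\mapsto \mathbb L\otimes\chi$ defines a morphism
\[
M_B(\bar X,1)=\mathrm{Hom}(H_1(\bar X,\Z),\C^\times)\to M_B(X,r,[R_i]).
\]
The source is a disjoint union of copies of $(\C^\times)^{b_1(\bar X)}$, indexed by the torsion of $H_1$. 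Taking determinants, $\det(\mathbb L\otimes\chi)=\det(\mathbb L)\otimes\chi^{r}$. The map $\chi\mapsto\chi^r$ on the torus $(\C^\times)^{b_1}$ is finite and surjective. Hence if $b_1(\bar X)>0$, the family $\mathbb L\otimes\chi$ has infinitely many pairwise non-isomorphic members, already distinguished by their determinants. It is a connected positive-dimensional family through $[\mathbb L]$, so $[\mathbb L]$ is not isolated. This contradicts rigidity, and therefore $b_1(\bar X)=0$.

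For the second statement, set $\delta=\det\mathbb L$. This is a rank one local system on $X$ whose monodromy around $D_i$ is $\det R_i$. Quasi-unipotence of $R_i$ means $\det R_i$ is a root of unity. So for some $N$, the character $\delta^N$ is trivial on all the $\gamma_i$. It therefore factors through $\pi_1(\bar X,x)$, the kernel of $\pi_1(X)\to\pi_1(\bar X)$ being normally generated by the $\gamma_i$ (purity/Zariski–van Kampen for smooth $\bar X$). Since $b_1(\bar X)=0$, $H_1(\bar X,\Z)$ is finite, so $\delta^N$ has finite order. Hence $\delta$ does too.

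The main point to check is the non-isolatedness argument: whether $M_B$ is taken with fixed determinant. The definition in the introduction does not fix the determinant, so the argument above applies. If one used $M_B(X,r,[R_i],\delta)$ instead, one would twist by $\chi$ with $\chi^r=1$, and that would not work. I would therefore stick to the unfixed-determinant space as in the Definition.
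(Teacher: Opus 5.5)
Your proof is correct and follows essentially the paper's argument: you twist $\mathbb L$ by characters of $\pi_1(\bar X,x)$ to get a positive-dimensional family preserving the $[R_i]$, which is non-constant because $\mathbb L\otimes\chi\cong\mathbb L$ forces $\chi^r=1$; finiteness of $\det(\mathbb L)$ then comes from the $\det R_i$ being roots of unity combined with finiteness of $H_1(\bar X,\Z)$, with only the order in which these two facts are used differing from the paper. Your caveat that the first step needs the moduli space without fixed determinant is also right, and it is exactly what the paper's Example~\ref{ex} illustrates.
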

\begin{proof}
If $b_1(\bar X) \neq  0$ there is a one parameter family  $\xi_t \colon  \pi_1(X,x)\to  \mathbb C^\times$ of non-torsion characters yielding a continuous non-trivial deformation  so $\mathbb L \otimes \xi_t$  of $\mathbb L$ preserving the $[R_i]$. Indeed, if  $\mathbb L\otimes \xi$ is isomorphic to $\mathbb L$, then  ${\rm det} (\xi)^r=1$ and there is no continuous deformation in $H^1(\bar X, \mu_r)$.
So  replacing ${\rm det}(\mathbb L)$ by a non-zero power, it becomes a character of the abelian quotient of ${\rm Ker}\big( \pi_1(X,x)^{\rm ab} \to \pi_1(\bar X,x)^{\rm ab} \big)$ which is spanned by the  loops around $D_i$ based at $x$. It sends those  to the determinant of  the $[R_i]$, which by our assumption has finite order. \end{proof}

 \subsection{Definition of privileged local systems} \label{defn:priv}
 
 We fix quasi-unipotent conjugacy classes $[R_i]\subset \mathsf{GL}_r(\mathbb C)$ for $i\in I$ and 
 a torsion character $\delta \colon \pi_1(X,x) \to \mu_n\subset \G_m$. We  consider the moduli space $M_B(X,r, [R_i])$,  of irreducible local systems with the conditions with monodromies $[R_i]$ along $D_i$. Recall from \cite[Proposition~2.3]{EG18}  that the Zariski tangent space at $\mathbb L\in M_B(X,r,[R_i])$ is computed by  
 \ga{}{ T_{\mathbb L} M_B(X,r, [R_i]) =H^1(\bar X, j_{!*} \sE nd(\mathbb L))= H^1(U, a_*\mathcal E nd(\mathbb L)) \notag}
 where $j\colon X\xrightarrow{a} U\xrightarrow{b} \bar X$ and $U$ is the complement of the singular locus of $D$ in 
 $\bar X$.  For example, if $\mathbb L$ extends to $\bar X$ as a local system $\mathbb L_{\bar X}$ then 
 $T_{\mathbb L} M_B(X,r, [R_i]) =H^1(\bar X, \mathcal E nd(\mathbb L_{\bar X}))$. Similarly 
 \ga{}{ T_{\mathbb L} M_B(X,r, [R_i], \delta) =H^1(\bar X, j_{!*} \sE nd^0(\mathbb L))=
 H^1(U, a_*\mathcal E nd^0(\mathbb L)) \notag} for $\mathbb L\in M_B(X,r, [R_i],\delta)$, the moduli spaces of irreducible local systems with monodromies $[R_i]$ along $D_i$ and determinant $\delta$, and 
  where $^0$ indicates the trace zero endomorphisms.

\begin{defn} \label{defn:phys_rig}

Let $(X, D, j, R_i, \delta)$ as before where we allow $D$ to be empty. 
An  irreducible local system  $\mathbb L$ on $X$  is called {\it privileged with respect to} $[R_i]$, respectively $([R_i], \delta)$  if for any $\mathbb L'\in M_B(X,r, [R_i])$, respectively  $\in  M_B(X,r, [R_i], \delta)$, with $\mathbb L'\neq \mathbb L$,  ${\rm dim}_{\mathbb C} T_{\mathbb L} M_B(X,r, [R_i])\neq {\rm dim}_{\mathbb C} T_{\mathbb L'} M_B(X,r, [R_i])$ respectively 
${\rm dim}_{\mathbb C} T_{\mathbb L} M_B(X,r, [R_i], \delta)\neq {\rm dim}_{\mathbb C}T_{\mathbb L'} M_B(X,r, [R_i],\delta)$.
\\ \ \\
We then say that $\mathbb L$ is the {\it privileged point} of $M_B(X,r, [R_i])$  respectively $M_B(X,r,[R_i], \delta)$.
\end{defn}

\begin{rmks}
\begin{itemize}
\item[1)]  If the moduli space consists of a single point, the corresponding local system is privileged. Therefore, this notion generalizes Katz's notion of physical rigidity. In particular, rigid local systems on open subsets of $\Pb^1_{\Cb}$ studied by Katz are examples of privileged local systems.
\item[2)]
The difference between the non-respective and the respective cases  lies solely on the fact that 
 for a non-trivial character $\xi$  on $X$, it could be that $\mathbb L\otimes \xi$ has the same $[R_i]$ monodromies along $D_i$, so a priori fixing the determinant  yields more examples, as we see in Example~\ref{ex}. 
 \item[3)] Privileged local systems do not have to be rigid. For example 
 if the Betti moduli space of irreducible local systems  is irreducible and has one  isolated singularity,  then this point is privileged. 
 \end{itemize}
\end{rmks}

\begin{ex} \label{ex}
Let $X$ be a  smooth projective  curve (of arbitrary genus) and $\delta$ be 
  a torsion rank one local system. Then, $\Lb = \delta$ is a privileged rank $1$ local system with respect to  $(\varnothing,\delta)$, where $\varnothing$ stands for the empty collection of conjugacy classes.
So  the existence of a privileged local system with respect to $([R_i],\delta)$ (i.e., with fixed torsion determinant $\delta$) does not allow one to conclude triviality of $H_1(\bar{X},\Qb)$.
\end{ex}
\subsection{Extending the local systems} \label{ss:good}   By Kawamata's Lemma ~\cite[Theorem~17]{Kaw81},
     there is 
   a  Cartesian  diagram
\ga{}{\xymatrix{ \ar[d]_qY \ar[r]^h & \ar[d]^{\bar q} \bar Y\\
X \ar[r]^j & \bar X
}\notag}
such that $\bar Y$ is smooth projective, $\bar Y\setminus Y=\bar q^{-1}(D)$ is a strict normal crossings divisor, $\bar q$ ramifies along a strict normal crossings divisor  $\bar q^{-1} D\cup R$,   $q$ and $\bar q$ are Galois (\cite[Lemma~5.2.4]{Mat02}) under a finite group $G$. If at  a closed point  $x$ of $D$, the local formal coordinates in $\bar X$ are $x_i, \ i=1,\ldots, d', \ d' \le d$ with $D_i$ defined by $x_i=0$, at the closed point $y$ of $\bar Y$ above $x$, there are local formal coordinates $y_i, \ i=1, \ldots d$ with $E_i:=\bar q^{-1}(D_i)$ defined by $y_i=0$ and $y_i^{n_i}=x_i$. 

For  an irreducible local system   $\mathbb L$  with {\it  finite} monodromies $[R_i]$ along the $D_i$ such that $[R_i^{n_i}]=1$ and with  finite order determinant $\delta$, there are
 local systems $\mathbb M, \ \delta_\circ $ on $\bar Y$ such that 
\ga{}{ q^*\mathbb L= h^*\mathbb M, \ q^*\delta=h^*\delta_\circ.\notag}
Hence,  $\mathbb M$  and $\delta_\circ$ are  uniquely defined on $\bar Y$,  $\mathbb M$ is semi-simple by Clifford theory, and  integral if $\mathbb L$ is. 
Furthermore $\mathbb L= q_*(\mathbb M)^G$.

\begin{lem}  \label{lem:coh}
With the notation above, and $[R_i]$ of finite order,  we have $$T_{\mathbb L} M_B(X,r, [R_i])= H^1(\bar Y, \mathcal E nd(\mathbb M))^G \  {\rm for} \  \mathbb L\in M_B(X,r,[R_i]).$$ Similarly,  there is an identification
$$T_{\mathbb L} M_B(X,r, [R_i], \delta)=  H^1(\bar Y, \mathcal E nd^0(\mathbb M))^G \ 
{\rm for}   \ \mathbb L\in M_B(X,r,[R_i], \delta).$$
\end{lem}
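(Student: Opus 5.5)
We start from the description of the tangent space recalled above from \cite[Proposition~2.3]{EG18}, namely $T_{\mathbb L} M_B(X,r,[R_i]) = H^1(\bar X, j_{!*}\mathcal E nd(\mathbb L)) = H^1(U, a_*\mathcal E nd(\mathbb L))$, with the analogous formula for $\mathcal E nd^0$ in the case with fixed determinant. The plan is to pull this back along the Kawamata cover $\bar q$ and then take $G$-invariants. Because we work with $\mathbb C$-coefficients and $G$ is finite, taking $G$-invariants is exact. The main thing to establish is a sheaf-level identity on $\bar X$:
\[
j_{!*}\mathcal E nd(\mathbb L) \;=\; \big(\bar q_* \mathcal E nd(\mathbb M)\big)^G .
\]
Here we regard $j_{!*}\mathcal E nd(\mathbb L)$ as the (shifted) intermediate extension. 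Since the monodromies $R_i$ are finite, hence semisimple, the natural candidate for it is $j_*\mathcal E nd(\mathbb L)$, the sheaf $R^0 j_*$.

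First, on $X$ we have $q^*\mathcal E nd(\mathbb L)=h^*\mathcal E nd(\mathbb M)$. From $\mathbb L=(q_*\mathbb M)^G$ we also get $\mathcal E nd(\mathbb L) = (q_*\mathcal E nd(q^*\mathbb L))^G$, since $q$ is finite étale Galois over $X$.

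Second, we compare the extensions over $D$ locally. In the formal coordinates of \S\ref{ss:good}, the local fundamental group near a point of $D$ is $\mathbb Z^{d'}$, acting through the commuting finite-order monodromies $R_i$. The local group of the cover is $\prod \mu_{n_i}$, and $\mathbb M$ is the constant local system on the polydisc upstairs. Hence the stalk of $(\bar q_*\mathcal E nd(\mathbb M))^G$ at $x$ is $\mathcal E nd(\mathbb M_y)^{\text{stab}}$, which is the invariants $\mathcal E nd(L_x)^{\langle R_i\rangle}$. This is exactly the stalk of $j_*\mathcal E nd(\mathbb L)$ at $x$.

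Next we must show that $R^k j_*$ contributes nothing to $H^1$ beyond what $j_*$ gives, or equivalently that $H^1(\bar X, j_*\mathcal E nd(\mathbb L))$ agrees with the formula of \cite{EG18}. The route is:
\[
H^1(\bar X,(\bar q_*\mathcal E nd\mathbb M)^G) = H^1(\bar X,\bar q_*\mathcal E nd\mathbb M)^G = H^1(\bar Y,\mathcal E nd\mathbb M)^G .
\]
The first equality uses exactness of invariants in characteristic $0$ and is the standard transfer argument. The second uses that $\bar q$ is finite, so $\bar q_*$ is exact. We then relate this to $H^1(U,a_*\mathcal E nd(\mathbb L))$. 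The Leray spectral sequence for $b$ gives $H^1(U, a_*\mathcal E) \hookleftarrow H^1(\bar X, b_*a_*\mathcal E)$, with defect controlled by $H^0(\bar X, R^1b_*)$. This sheaf is supported in codimension $\ge 2$ (the singular locus of $D$), and a punctured neighbourhood of such a point has trivial $H^1$ because its link is simply connected in codimension $\ge2$. So $R^1b_*$ vanishes on the relevant sheaf and the two $H^1$'s agree.

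The main obstacle is this codimension-two comparison between $U$ and $\bar X$. The simplest way around it is to run the whole argument on $U$ instead. Replace $\bar Y$ by $V=\bar q^{-1}(U)$ and compute there: $H^1(U,a_*\mathcal E nd\mathbb L)=H^1(V,\mathcal E nd\mathbb M)^G$. Then use that $\bar Y\setminus V$ has codimension $\ge 2$ in the smooth $\bar Y$ and $\mathcal E nd\mathbb M$ is a local system on $\bar Y$. By purity, the restriction $H^1(\bar Y,\mathcal E nd\mathbb M)\to H^1(V,\mathcal E nd\mathbb M)$ is an isomorphism, since removing codimension $\ge2$ from a smooth variety does not change $\pi_1$.

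For the determinant version, the same argument applies with $\mathcal E nd^0$. The trace splitting $\mathcal E nd = \mathcal E nd^0\oplus \mathbb C$ is $G$-equivariant and compatible with $q^*$, so taking the trace-free summand on both sides gives the second identity.
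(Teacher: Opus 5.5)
Your final route is correct and is essentially the paper's proof. You restrict to $U$, identify $a_*\mathcal E nd(\mathbb L)$ with $\big((\bar q|_V)_*\mathcal E nd(\mathbb M)\big)^G$ for $V=\bar q^{-1}(U)$, pass to $H^1(V,\mathcal E nd(\mathbb M))^G$, and conclude by purity on $\bar Y$ since $\bar Y\setminus V$ has codimension $\ge 2$. The paper instead uses the smaller open $V$ complementary to the singular locus of $\bar q^{-1}D\cup R$ and first shrinks $U$ to $\bar q(V)$, a step your choice of $V$ avoids. The earlier detour through $\bar X$ and the Leray spectral sequence is unnecessary; its ``simply connected link'' argument would not suffice as stated, because $a_*\mathcal E nd(\mathbb L)$ is not locally constant near the singular locus of $D$.
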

\begin{proof}
We restrict $q$ to $q_\circ \colon Y_\circ =q^{-1}(X_\circ)\to X_\circ$ where $Y_\circ=Y\setminus R$ and $X_\circ =X\setminus \bar q(R)$.  Set $h_\circ: Y_\circ  \xrightarrow{\alpha} V \xrightarrow{\beta} \bar Y$ where $V$ is the complement of the singular locus of $\bar q^{-1}D\cup R$ in $\bar Y$.  Then $\bar q(V)\xrightarrow{\alpha'}  U$  is an open embedding and the complement of $\bar q(V)$ in $\bar X$ is also of codimension $\ge 2$. 
As $\mathbb L$ is defined on $R\setminus D$, the equation   $H^1(U, a_* \mathcal E nd(\mathbb L))= H^1(\bar q(V),  (a \circ \alpha')_* \mathcal E nd(\mathbb L))$ holds.  On the other hand  by projection formula $H^1(\bar q(V),  (a \circ \alpha')_* \mathcal (\mathbb L^\vee \otimes \mathbb L\otimes  q_{\circ*} \mathbb C))=H^1(V,   \mathbb M^\vee \otimes \mathbb M)$. The latter  is equal to $H^1(\bar Y,   \mathbb M^\vee \otimes \mathbb M)$ as $\mathbb M$ extends to $\bar Y$. 
Thus $H^1(V,   \mathbb M^\vee \otimes \mathbb M)^G= H^1(U,   a_*(\mathbb L^\vee \otimes \mathbb L))$. This proves the first formula. As for the second one, we write 
$H^1(\bar Y, \mathcal E nd(\mathbb M))= H^1(\bar Y, \mathcal E nd^0(\mathbb M)) \oplus H^1(\bar Y, \mathbb C)$ so 
$H^1(\bar Y, \mathcal E nd(\mathbb M))^G= H^1(\bar Y, \mathcal E nd^0(\mathbb M))^G \oplus H^1(\bar Y, \mathbb C)^G$ and the latter group is equal to $H^1(\bar X, \mathbb C)=H^1(U, a_* \mathbb C)$. This finishes the proof. 
\end{proof}

\begin{rmk} \label{rmk:rootst}
In \cite[Section~3.4]{EG25} we used a root stack in place of the cover considered here. This has the advantage that it is purely canonical, here is no $G$-action to be considered and the computations with Deligne's extensions require less care. The main reason why we consider the projective $\bar Y$  cover in this note is that we use in Section~\ref{sec:proof}   Simpson's theorem according to which on $\bar Y$, a 
polarized complex  variation of Hodge structure with underlying polystable vector bundle with vanishing numerical Chern classes is a direct sum of unitary connections. This is well documented when $\bar Y$ is projective,  see \cite[Section~4]{Sim92}.

\end{rmk}
\subsection{Privileged local systems and integrality} \label{sec:int}
As by Corollary~\ref{cor:cyclo} privileged local systems are in particular defined over a number field, 
the proof of \cite[Theorem~1.1]{EG18}, relying on Lemma~3.4 in {\it loc.cit.}, applies readily to conclude the following theorem. The proof is omitted.

\begin{thm}  \label{thm:int} 
A privileged local system $\mathbb L$  with respect  $[R_i]$, respectively  $([R_i], \delta)$ with $[R_i]$ quasi-unipotent  
 along the $D_i$ and $\delta$ a torsion character  has a monodromy representation defined over the ring of algebraic integers. In particular, if $[R_i]$ have finite order, then $\mathbb M$ in Section~\ref{ss:good}  is integral as well. 
\end{thm}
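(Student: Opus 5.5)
The plan is to follow the strategy of \cite[Theorem~1.1]{EG18}. The decisive input is that a privileged local system is characterized by a property invariant under every automorphism of the coefficients which preserves the set of conjugacy classes $[R_i]$ (and $\delta$). We first use Corollary~\ref{cor:cyclo}, together with the finiteness of the moduli space near $\mathbb L$ in the appropriate sense, to see that $\mathbb L$ is defined over a number field $K$. Since the $[R_i]$ are quasi-unipotent, their characteristic polynomials have roots of unity as eigenvalues. Their conjugacy classes are therefore defined over a cyclotomic field. The same holds for $\delta$, since it is torsion.

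Fix a prime $\ell$ and a non-archimedean place $v$ of $K$, with an embedding $\iota\colon K\hookrightarrow \bar{\Q}_\ell$ (or more generally into $\C_\ell$). We must show that the representation $\rho\colon \pi_1(X,x)\to \mathsf{GL}_r(K)$ stabilizes a lattice over $\mathcal O_{K_v}$. As in \cite[Lemma~3.4]{EG18}, it suffices to show that $\iota\circ\rho$ is conjugate to a representation into $\mathsf{GL}_r$ of a ring of integers. For this we use the following argument.

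The profinite completion $\widehat{\pi_1(X,x)}$ is compared with the étale fundamental group of a model of $X$ over a finitely generated field $F$, and $\iota\circ \rho$ is viewed as an $\ell$-adic representation of the geometric fundamental group. The absolute Galois group of $F$ acts on the set of isomorphism classes of such representations by conjugation. This action preserves the local monodromy conjugacy classes up to replacing the $[R_i]$ by their Galois twists under the cyclotomic character. After passing to a finite extension of $F$ containing enough roots of unity, the $[R_i]$ and $\delta$ themselves are preserved. The action also preserves the dimension of the tangent space $H^1(U,a_*\mathcal End(\mathbb L))$, because this is an étale cohomology group compatible with comparison. Hence, by the privileged property (uniqueness of the point with that tangent dimension), the orbit of $\iota\circ\rho$ is a single point. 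So $\iota\circ\rho$ extends to a representation of an arithmetic fundamental group, defined over a finite extension. By compactness of that group, a lattice is stable. This gives integrality at $v$ for every $v$, so $\rho$ factors through $\mathsf{GL}_r$ of the ring of algebraic integers.

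The main obstacle is that the $\ell$-adic representation $\iota\circ\rho$ is a priori not a point of the complex moduli space. One has to know that the tangent-dimension invariant of the privileged point is detected algebraically, so that it is preserved both under field automorphisms $\C\simeq\bar\Q_\ell$ and under Galois conjugation. I would handle this by noting that $M_B$ and the dimension of the Zariski tangent space are defined over $\bar\Q$ and are stable under $\mathrm{Aut}(\C)$. Consequently the privileged point is fixed by every automorphism that fixes the $[R_i]$ and $\delta$.

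For the second statement, assume the $[R_i]$ have finite order. Then by Section~\ref{ss:good} we have $h^*\mathbb M=q^*\mathbb L$. Since $\pi_1(Y)\to\pi_1(\bar Y)$ is surjective, the monodromy of $\mathbb M$ is the image of that of $q^*\mathbb L$. The latter is integral by the first part, so $\mathbb M$ is integral as well.
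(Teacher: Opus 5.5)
\textbf{There is a genuine gap, and it sits exactly where integrality is supposed to come from.} You write that $\iota\circ\rho$ ``is viewed as an $\ell$-adic representation of the geometric fundamental group''. But a homomorphism $\pi_1(X,x)\to \mathsf{GL}_r(K_v)$ extends continuously to $\widehat{\pi_1(X,x)}$ if and only if its image is relatively compact. That holds if and only if it stabilizes an $\mathcal O_{K_v}$-lattice, which is precisely the integrality at $v$ you are trying to prove. So the argument is circular. The Galois-orbit step and the final ``by compactness a lattice is stable'' only make sense once continuity is known, and at that point integrality at $v$ is already in hand, so passing to an arithmetic fundamental group adds nothing.

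Your ``main obstacle'' paragraph does not repair this. Invariance of the privileged point under $\mathsf{Aut}(\mathbb C)$, or under isomorphisms $\mathbb C\simeq\bar{\mathbb Q}_\ell$, controls only the field of definition (this is Corollary~\ref{cor:cyclo}), never the denominators. Note also that \cite[Lemma~3.4]{EG18} is not a reduction lemma of the kind you cite. It states that forming companions preserves $\dim H^1(\bar X, j_{!*}\mathcal{E}nd^{(0)})$, and it is exactly the ingredient your argument is missing.

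\textbf{How the paper gets around this.} The paper invokes the argument of \cite[Theorem~1.1]{EG18} once $\mathbb L$ is known to be defined over $K$. That argument starts at places where continuity is free and transports integrality to the remaining places using companions.
\begin{enumerate}
\item Since $\pi_1(X,x)$ is finitely generated, $\rho$ takes values in $\mathsf{GL}_r(\mathcal O_K[1/N])$ for some $N$. So at any $v\nmid N$ it is an honest $\ell$-adic local system.
\item Spread out to a model over a \emph{finite} field $\mathbb F_q$ with $p\nmid Nn$. By tame specialization this becomes a tame $\ell$-adic local system on $X_{\bar{\mathbb F}_q}$.
\item Frobenius acts on such local systems by twisting the $[R_i]$ and $\delta$ by powers of $q$. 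After replacing $q$ by a power with $q\equiv 1 \bmod n$, the privileged property (uniqueness of the tangent dimension) forces Frobenius invariance. Hence, after a twist, the local system descends to a Weil sheaf with finite-order determinant on $X_{\mathbb F_q}$.
\item Drinfeld's theorem then gives $\lambda$-adic companions for every place $\lambda\nmid p$ of $\bar{\mathbb Q}$. These are continuous, hence integral.
\item By \cite[Lemma~3.4]{EG18} together with the matching of tame local monodromies, their geometric restrictions are again privileged points, now for the Galois-conjugated data $([\sigma(R_i)],\delta^\sigma)$. By Proposition~\ref{prop:sigma} they are therefore the $\mathbb L^\sigma$.
\item This gives integrality of every $\mathbb L^\sigma$ at every $\lambda\nmid p$. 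Varying $p$ covers the remaining places.
\end{enumerate}
Your final step, deducing integrality of $\mathbb M$ from that of $\mathbb L$ via the surjection $\pi_1(Y)\to\pi_1(\bar Y)$ and $h^*\mathbb M=q^*\mathbb L$, is fine.
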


\subsection{Privileged local systems  and polarized complex variations of Hodge structure} 

\begin{thm} \label{thm:tpriv}
A privileged local system $\mathbb L$  with respect  $[R_i]$, respectively  $([R_i], \delta)$ with $[R_i]$ of finite order 
 along the $D_i$ and $\delta$ a torsion character  underlies a polarized complex variation of Hodge structure. In particular $\mathbb M$ in Section~\ref{ss:good} underlies a polarized complex variation of Hodge structure as well. 

\end{thm}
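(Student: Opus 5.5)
Following Simpson's argument for rigid local systems \cite[Section~4]{Sim92}, I will use the $\mathbb C^\times$-action on the moduli of (parabolic/logarithmic) Higgs bundles, i.e.\ on the Dolbeault counterpart of $M_B(X,r,[R_i])$, and show that the privileged point is fixed by it. Since the $[R_i]$ have finite order, I pass to the Kawamata cover $\bar q\colon \bar Y\to \bar X$ of Section~\ref{ss:good}. There $q^*\mathbb L=h^*\mathbb M$ with $\mathbb M$ a semi-simple local system on the smooth projective $\bar Y$, and $\mathbb L=q_*(\mathbb M)^G$. Local systems $\mathbb L'\in M_B(X,r,[R_i])$ then correspond bijectively to $G$-equivariant local systems $\mathbb M'$ on $\bar Y$ of the same local type at the ramification (the fixed local $G$-representations at points over $D$ encode $[R_i]$), subject to the condition that $q_*(\mathbb M')^G$ is irreducible. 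Lemma~\ref{lem:coh} identifies $T_{\mathbb L'}M_B$ with $H^1(\bar Y,\mathcal End(\mathbb M'))^G$, respectively with $H^1(\bar Y,\mathcal End^0(\mathbb M'))^G$ when the determinant is fixed.

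\emph{Step 1.} By Simpson's non-abelian Hodge correspondence on $\bar Y$, $G$-equivariant semi-simple local systems correspond to $G$-equivariant polystable Higgs bundles $(E,\theta)$ with vanishing Chern classes. The correspondence is $G$-equivariant, being canonical via harmonic metrics. The $\mathbb C^\times$-action $t\cdot(E,\theta)=(E,t\theta)$ commutes with $G$. It preserves the local type at the fixed points of $G$, which is discrete data read off from the $G$-action on the fibres and hence constant in connected families. It also preserves the determinant, since the determinant's Higgs field scales and $\delta_\circ$ is torsion, hence fixed. Irreducibility of $\mathbb L'$, which is equivalent to $G$-irreducibility of $\mathbb M'$, is likewise preserved, because $G$-equivariant sub-objects correspond under the equivalence. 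So $\mathbb C^\times$ acts on $M_B(X,r,[R_i])$ (respectively with $\delta$), as a continuous action on the underlying topological space.

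\emph{Step 2: invariance of tangent dimension.} For $t\in\mathbb C^\times$ and $\mathbb L_t:=t\cdot\mathbb L$, I need $\dim T_{\mathbb L_t}=\dim T_{\mathbb L}$. I would argue via Simpson's formality: $H^1(\bar Y,\mathcal End(\mathbb M'))$ is canonically the Dolbeault $\mathbb H^1$ of the complex $\mathcal End(E)\xrightarrow{[\theta,-]}\mathcal End(E)\otimes\Omega^1\to\cdots$. Multiplying $\theta$ by $t$ gives an isomorphic complex via the rescaling $\Omega^p\mapsto t^p\Omega^p$. This isomorphism is $G$-equivariant, so the dimensions of the $G$-invariants agree. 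The same holds for $\mathcal End^0$.

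\emph{Step 3: conclusion.} By Definition~\ref{defn:phys_rig}, $\mathbb L$ is the unique point with its tangent dimension, so $t\cdot\mathbb L\cong\mathbb L$ for all $t$. Hence $\mathbb M$, whose $G$-equivariant Higgs bundle is determined by that of $\mathbb L$, has a $\mathbb C^\times$-fixed Higgs bundle: $(E,\theta)\cong(E,t\theta)$ for all $t$. By \cite[Section~4]{Sim92} such a fixed point carries a grading $E=\oplus E^p$ with $\theta\colon E^p\to E^{p-1}\otimes\Omega^1$, and the harmonic metric makes $\mathbb M$ a polarized complex VHS. Here $G$ acts through the $\mathbb C^\times$-fixed structure. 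The grading comes from the eigenspaces of the automorphisms realising $(E,\theta)\cong(E,t\theta)$, which may be chosen to commute with $G$ by uniqueness up to automorphism of an irreducible-equivariant object. Hence the VHS is $G$-equivariant and descends to $\mathbb L=q_*(\mathbb M)^G|$ on $X$.

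\emph{Main obstacle.} The delicate step is making the $G$-equivariance work throughout. First, the $\mathbb C^\times$-isomorphisms must be chosen $G$-equivariantly. This holds because $\mathbb M$ is $G$-irreducible, so equivariant automorphisms are scalars and a $G$-compatible $\mathbb C^\times$-linearization exists, possibly after a finite cover of $\mathbb C^\times$. Second, I must check that $\mathbb M$ being only semi-simple, not irreducible, causes no problem with Simpson's fixed-point theorem. It does not, since polystable fixed points still yield direct sums of VHS.
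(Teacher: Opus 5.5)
Your proposal is correct and follows essentially the same route as the paper. Both pass to the Kawamata cover $\bar Y$ and let $\mathbb C^\times$ act on Higgs bundles, commuting with $G$. Both check that the finite-order local monodromies and $\dim H^1(\bar Y,\mathcal{E}nd(\cdot))^G$ are preserved, then use privilege to get $t\cdot\mathbb L\cong\mathbb L$ and conclude with Simpson's fixed-point criterion. Your version is slightly more careful than the paper's on two points it leaves implicit: you check that $t\cdot\mathbb L$ remains irreducible, which is needed to invoke privilege, and that the Hodge grading can be chosen $G$-equivariantly so that it descends to $\mathbb L$.
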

\begin{proof}
On the  moduli space $M_{Dol}(\bar Y, r)$ of semi-simple Higgs bundles, respectively $M_{Dol}(\bar Y, r, \delta_\circ^H),$ where 
$\delta_\circ^H$ is the line bundle associated to $\delta_\circ$ endowed with the zero Higgs field, the $\mathbb G_m$-action is an isomorphism. Thus  $t\in \mathbb C^\times$  induces an algebraic linear  isomorphism between the cohomology groups 
$H^1(\bar Y, \mathcal E nd(\mathcal V))$ and $H^1(\bar Y, \mathcal E nd(t\cdot \mathcal V))$, and similarly between $H^1(\bar Y, \mathcal E nd^0(\mathcal V))$ and $H^1(\bar Y, \mathcal E nd^0(t\cdot \mathcal V))$.  Assume that $\mathcal V$ is the Higgs bundle associated to $\mathbb M$. We denote by $t\cdot \mathbb M$ the local system associated to $t \cdot \mathcal V$.  Thus 
$t\in \mathbb C^\times$  induces an  $\mathbb{R}$-linear  isomorphism between the cohomology groups 
$H^1(\bar Y, \mathcal E nd(\mathbb M))$ and $H^1(\bar Y, \mathcal E nd(t\cdot \mathbb M))$, and similarly between $H^1(\bar Y, \mathcal E nd^0(\mathbb  M))$ and $H^1(\bar Y, \mathcal E nd^0(t\cdot \mathbb M))$.  
On the other hand, the action of $G$ is defined on $\bar Y$, thus commutes to the action of $\mathbb G_m$ on  $\mathcal V$ and $\mathbb M$. The action of $G$ on $\mathbb M$ induces one on $t \cdot \mathbb M$, and since $\mathbb M$ is invariant under $G$, so is $t\cdot \mathbb M$ with respect to this action. 
We conclude that ${\rm dim}_{\mathbb C} H^1(\bar Y, \mathcal E nd(\mathbb M))^G= 
{\rm dim}_{\mathbb C} H^1(\bar Y, \mathcal E nd( t \cdot \mathbb  M))^G$ and likewise for the trace free parts.    Let us write $ q_{\circ *} (t\cdot \mathbb M)^G = t\cdot \mathbb L$ on $X_\circ$. Then $t\cdot \mathbb  L$ is a real analytic deformation of $\mathbb L|_{X_\circ}$. The local monodromies of $t\cdot \mathbb L$ must then have finite order dividing $n$ along $D\cup \bar q(R)$ and at the same type be real analytic deformations of the ones of  $\mathbb L$, that is $[R_i]$ along $D_i$ and $1$ along the other components meeting $X$. This implies that $t\cdot \mathbb L$ has the same monodromies at infinity as $\mathbb L$.  
Lemma~\ref{lem:coh} implies that the tangent spaces at the Betti moduli spaces for $\mathbb L$ and $t\cdot \mathbb L$ are the same. As $\mathbb L$ is privileged, we conclude $\mathbb L=t\cdot \mathbb L$. Thus $\mathbb M=q^{-1} \mathbb L=q^{-1} t\cdot \mathbb L =t\cdot \mathbb M$ on $Y$.  Thus $\mathbb M=t\cdot \mathbb M$ on $\bar Y$ . 
By Simpson's Theorem~\cite[Corollary~4.2]{Sim92} this implies that $\mathbb M$ underlies a polarized complex variation of Hodge structure, so so does $\mathbb L$. This finishes the proof.
\end{proof}

\subsection{Field of definition of privileged local systems}
Recall that if $\mathbb L$ is any local system defined by $\rho \colon \pi_1(X,x)\to {\mathsf {GL}}_r(\C)$, where $x$ is a complex point of $X$, and $\sigma \in {\mathsf{ Aut}}(\C)$, one defines 
$\mathbb L^\sigma$   by   the composition
\ga{}{ \rho^\sigma  \colon \pi_1(X,x)\xrightarrow{\rho}  {\mathsf{ GL}}_r(\C) \xrightarrow{\sigma} {\mathsf{ GL}}_r(\C). \notag} So, if 
$\mathbb L\in M_B(X,r,[R_i])(\C)$,  respectively
$\mathbb L\in M_B(X,r,[R_i], \delta)(\C)$, then 
$\mathbb L^\sigma \in M_B(X,r,[ \sigma(R_i)])(\C)$, respectively
$\mathbb L^\sigma \in M_B(X,r,[ \sigma(R_i)], \delta^\sigma)(\C)$.
Observe that $[R_i]$ is quasi-unipotent if and only if $[\sigma(R_i)]$ is. 

\begin{prop} \label{prop:sigma}
Let $(X,D, j, [R_i], \delta)$ be as in Definition~\ref{defn:phys_rig}, $\mathbb L$ be an irreducible   local system,  let $\sigma \in {\mathsf{ Aut}}(\mathbb C)$. If $\mathbb L$ is privileged
with respect to  $[R_i]$, resp.  $([R_i], \delta)$, so is 
$\mathbb L^\sigma$ with respect to  $[\sigma(R_i)]$ resp. $([\sigma(R_i)], \delta^\sigma)$.
\end{prop}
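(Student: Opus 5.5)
The plan is to show that $\sigma$ induces a bijection between the complex points of the relevant moduli spaces which preserves the dimension of Zariski tangent spaces. Privilegedness is a statement purely about these dimensions, so it transfers at once.

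\textbf{Step 1: the bijection.} The map $\rho\mapsto\rho^\sigma$ sends irreducible representations to irreducible ones: an invariant subspace $W$ for $\rho^\sigma$ gives the invariant subspace $\sigma^{-1}(W)$ for $\rho$, since $\sigma$ acts coordinatewise on $\C^r$ and is a field automorphism. It also sends $\rho(\gamma_i)\in[R_i]$ to $\rho^\sigma(\gamma_i)\in[\sigma(R_i)]$, and $\det\rho=\delta$ to $\det\rho^\sigma=\delta^\sigma$. It respects isomorphism classes and has inverse $\sigma^{-1}$. We therefore get a bijection of sets
\[
M_B(X,r,[R_i])(\C)\to M_B(X,r,[\sigma(R_i)])(\C),
\]
and likewise with fixed determinant.

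\textbf{Step 2: tangent spaces.} I would show
\[
\dim_\C T_{\mathbb L}M_B(X,r,[R_i])=\dim_\C T_{\mathbb L^\sigma}M_B(X,r,[\sigma(R_i)]),
\]
and similarly for $\mathcal End^0$. This follows from the cohomological description recalled before Definition~\ref{defn:phys_rig}, namely $T_{\mathbb L}=H^1(U,a_*\mathcal End(\mathbb L))$. Since $U$ is a finite CW-type space, this cohomology can be computed by a complex of finite-dimensional $\C$-vector spaces built from the monodromy matrices. For instance, it is group cohomology of $\pi_1(U)$ with coefficients in the invariants of $\mathcal End$ under local monodromy, or \v{C}ech cohomology for a finite good cover. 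The differentials of this complex are polynomial in the entries of $\rho$ with integer coefficients. Applying $\sigma$ to all entries gives the corresponding complex for $\mathbb L^\sigma$. This is a $\sigma$-semilinear isomorphism of complexes, so it preserves ranks of differentials and hence the dimensions of cohomology.

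A cleaner formulation is that $(a_*\mathcal End(\mathbb L))^\sigma= a_*\mathcal End(\mathbb L^\sigma)$ as sheaves after base change $\C\otimes_{\sigma,\C}$, and cohomology commutes with this flat base change of coefficients. The trace-free part is preserved because trace commutes with $\sigma$.

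\textbf{Step 3: conclusion.} Let $\mathbb L''\ne\mathbb L^\sigma$ be a point of $M_B(X,r,[\sigma(R_i)])$. Write $\mathbb L''=(\mathbb L')^\sigma$ with $\mathbb L'=(\mathbb L'')^{\sigma^{-1}}\ne\mathbb L$. By Step 2 and the privilegedness of $\mathbb L$,
\[
\dim T_{\mathbb L''}=\dim T_{\mathbb L'}\ne\dim T_{\mathbb L}=\dim T_{\mathbb L^\sigma}.
\]
The only point requiring care, and the main one I expect, is Step 2. One must make sure that the identification $T_{\mathbb L}M_B=H^1(U,a_*\mathcal End\mathbb L)$ from \cite[Proposition~2.3]{EG18} is insensitive to $\sigma$. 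The monodromy conditions $[R_i]$ become $[\sigma(R_i)]$, and the sheaf $a_*$ is defined by local invariants, which are computed by linear algebra over $\C$ compatible with $\sigma$. Both of these are algebraic over $\Z$, so they are compatible.
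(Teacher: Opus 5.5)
Your proposal is correct and is essentially the paper's argument. The paper's proof consists only of the observation that $\sigma$ induces an isomorphism between $H^1(\bar X, j_{!*}\mathcal End(\mathbb L'))$ and $H^1(\bar X, j_{!*}\mathcal End((\mathbb L')^{\sigma}))$ (and likewise for $\mathcal End^0$) for every $\mathbb L'$ in the moduli space; that is your Step~2, and your Steps~1 and~3 spell out the bijection on points that the paper leaves implicit. In Step~2, rely on the base-change justification, since the description as group cohomology of $\pi_1(U)$ is not literally right ($a_*\mathcal End(\mathbb L)$ is only constructible on $U$); nothing in your argument depends on it.
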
 
\begin{proof}
For any irreducible local system $\mathbb L'$ in $M_B(X,r, [R_i])$, respectively $M_B(X,r, [R_i], \delta)$, $\sigma$ induces an isomorphism between 
 $ H^1(\bar X, j_{!*} \mathcal E nd(\mathbb L'))$ and $H^1(\bar X, j_{!*} \mathcal E nd(\mathbb L^{' \sigma}))$
 and likewise with the trace free endomorphisms.
\end{proof}
If  the $[R_i]$ are quasi-unipotent, they are defined over a finite cyclotomic field $K_\circ\subset \C$. By enlarging the cyclotomic field, we may  assume that a given torsion character 
$\delta \colon \pi_1(\bar{X}) \to K_{\circ}^{\times}$
is defined over $K_\circ$ as well. 

\begin{cor} \label{cor:cyclo}
Let $(X, D,j, [R_i], \delta)$ be as in Definition~\ref{defn:phys_rig}, $\mathbb L$ be a privileged local system with respect  $[R_i]$ resp. $([R_i], \delta)$ where the $[R_i]$ are quasi-unipotent and $\delta$ is a torsion character.  Then, $\mathbb L$ is defined over a finite cyclotomic field $K\subset \C$ containing $K_\circ$. 
\end{cor}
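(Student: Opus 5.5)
The idea is to use Proposition~\ref{prop:sigma} to compute the stabilizer of $\mathbb L$ in ${\mathsf{Aut}}(\C)$, and then to descend $\mathbb L$ to the fixed field of that stabilizer.

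\emph{Stabilizer.} Let $\sigma \in {\mathsf{Aut}}(\C/K_\circ)$. Since $[R_i]$ is defined over $K_\circ$, we have $[\sigma(R_i)]=[R_i]$. Since $\delta$ takes values in $K_\circ^\times$, we have $\delta^\sigma=\delta$. So $\mathbb L^\sigma$ is again a point of $M_B(X,r,[R_i])$, respectively of $M_B(X,r,[R_i],\delta)$. The isomorphism induced by $\sigma$ in the proof of Proposition~\ref{prop:sigma} is $\sigma$-semilinear and bijective, so it preserves $\C$-dimensions. Hence
\[
{\rm dim}_{\C}\, T_{\mathbb L^\sigma} M_B = {\rm dim}_{\C}\, T_{\mathbb L} M_B .
\]
By Definition~\ref{defn:phys_rig}, $\mathbb L$ is the only point of the moduli space with this tangent dimension. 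Therefore $\mathbb L^\sigma\cong \mathbb L$ for every $\sigma\in{\mathsf{Aut}}(\C/K_\circ)$.

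\emph{Field of moduli.} The moduli space $M_B$ is of finite type over $K_\circ$, being a character variety with conjugacy-class conditions. The point $[\mathbb L]\in M_B(\C)$ is fixed by all of ${\mathsf{Aut}}(\C/K_\circ)$. A $\C$-point of a finite type $K_\circ$-scheme with this property is $K_\circ$-rational, since its residue field is finitely generated and fixed by all automorphisms. Equivalently, the trace function $\gamma\mapsto {\rm tr}\rho(\gamma)$ takes values in $K_\circ$: each trace lies in $\C$ and is fixed by ${\mathsf{Aut}}(\C/K_\circ)$.

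\emph{Descent.} Since $\mathbb L$ is irreducible, $\rho$ is absolutely irreducible. Its $K_\circ$-span $A$ inside $M_r(\C)$ is therefore a central simple $K_\circ$-algebra of degree $r$, and $A\otimes_{K_\circ}\C = M_r(\C)$. This splitting claim is where I expect the main obstacle. I would handle it by Burnside's theorem combined with the trace field argument: the image of $\rho$ spans $M_r(\C)$, and the trace form is defined over $K_\circ$. A central simple algebra over a number field is split by some finite extension $K'/K_\circ$. Then $\rho$ is conjugate to a representation with values in ${\mathsf{GL}}_r(K')$.

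\emph{Cyclotomic field.} It remains to make $K'$ cyclotomic. The Brauer class of $A$ in ${\rm Br}(K_\circ)$ is split by a cyclic, in particular abelian, extension of $K_\circ$. In fact one can choose the splitting field to be a large enough cyclotomic extension: by local class field theory, cyclotomic extensions of large local degree at the finitely many ramified places, and at the real places, kill the local invariants. By Kronecker–Weber this would require $K_\circ$ abelian over $\Q$, which holds here because $K_\circ$ is itself cyclotomic. Taking $K$ to be such a cyclotomic field containing $K_\circ$ gives the statement.
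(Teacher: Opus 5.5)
Your argument is correct and is essentially the paper's: the paper likewise regards $M_B(X,r,[R_i])$ (resp.\ with $\delta$) as defined over $K_\circ$, and splits the Brauer class of the ${\rm PGL}_r$-torsor over $[\mathbb L]$ --- which is the class of your central simple algebra $A$ --- over a larger cyclotomic field. You also make explicit two things the paper only asserts: that $[\mathbb L]$ is $K_\circ$-rational, because Proposition~\ref{prop:sigma} and uniqueness of the privileged point give $\mathbb L^\sigma\cong\mathbb L$ for all $\sigma\in{\mathsf{Aut}}(\C/K_\circ)$, and that a cyclotomic field splits the class, via local invariants (Kronecker--Weber is not needed there, since $K_\circ(\mu_m)$ is already cyclotomic).
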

\begin{proof}
The moduli space $M_B(X,r, [R_i])$ is the categorial quotient  by ${\rm PGL}_r$  of the fine moduli space
$ {\mathsf{ Hom}}(\pi_1(X,x), {\mathsf{ GL}}_r, [R_i]) \subset  {\mathsf{ Hom}}(\pi_1(X,x), {\mathsf{ GL}}_r) $
which is defined to be the inverse image of $\prod_{i\in I}[R_i] \in  \prod_{i \in I} {\rm GL}_r(\C)$ by the restriction of $\rho \in {\mathsf{ Hom}}(\pi_1(X,x), {\mathsf{ GL}}_r)$ to fixed closed paths $\tau_i\gamma_i\tau_i^{-1}$ where the $\gamma_{i}$ are small loops turning once around $D_i$ based at a point $x_i$ near $D_i$ and $\tau_i$ is a path from $x_i$ to $x$. So, the categorial quotient   is defined over $K_\circ$ and $M_B(X,r, [R_i])$ is defined over $K_\circ$. The same holds for 
$M_B(X,r, [R_i], \delta)$. 

As $\mathbb L$ is irreducible, the fibre 
${\mathsf{Hom}} (\pi_1(X,x), {\mathsf{ GL}}_r, [R_i])_\mathbb L \subset  {\mathsf{Hom}}(\pi_1(X,x), {\mathsf{ GL}}_r)_{K_\circ}$ of the categorial quotient is a ${\mathsf{ PGL}}(K_\circ)$-torsor. Its Brauer class  splits after pulling it back  to  a possibly  larger cyclotomic extension $K\supset K_\circ$. Thus ${\mathsf{ Hom}} (\pi_1(X,x), {\mathsf{ GL}}_r, [R_i])_\mathbb L(K)\neq \varnothing$ which is equivalent to saying that $\mathbb L$ is defined over $K$. One argues similarly in the case with fixed torsion determinant $\delta$, using the moduli space $M_B(X,r, [R_i], \delta)$. 
\end{proof}

\section{$p$-curvature conjecture}
Recall that the $p$-curvature conjecture predicts that if  an integrable  connection on $X$ as in Section~\ref{sec:phy_rig} has $p$-curvature equal to $0$ in the mod $p$ reduction of $X$ for almost all $p$, then its local system of flat sections has finite monodromy.  In \cite[Theorem~9.4.1]{Kat96} Katz proves that rigid local systems with  fixed quasi-unipotent monodromies along the $D_i$ on $X\hookrightarrow \P^1_{\Cb}$ verify the $p$-curvature conjecture. He deduces it from his classification of those local systems, involving a remarkable presentation in terms of the middle convolution functor applied to torsion characters, see  Theorem 5.2.1 in {\it loc. cit.} 

The aim of this note is to prove the following  vast generalization to higher-dimensional $X $  in the absence of a general classification and possibly of  rigidity. 
\begin{thm}[Main Theorem] \label{thm:main} Let $(X,D,j, [R_i], \delta)$ be as in Definition~\ref{defn:phys_rig} with  any $d\ge 1$. The privileged local systems with respect to $[R_i]$, respectively $([R_i],\delta)$ verify the $p$-curvature conjecture where the $[R_i]$ are quasi-unipotent along the $D_i$ and $\delta$ is a torsion character. 
\end{thm}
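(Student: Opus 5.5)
We aim to show that if the connection $(E,\nabla)$ underlying a privileged $\mathbb L$ has vanishing $p$-curvature for almost all $p$, then every Galois conjugate $\mathbb L^\sigma$ is unitary. Integrality (Theorem~\ref{thm:int}) then puts the monodromy in a discrete subgroup of a compact group, hence a finite one.

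The first step is to reduce to finite local monodromies. Vanishing $p$-curvature for almost all $p$ forces the residues of $\nabla$ along the $D_i$ to be nilpotent up to rational eigenvalues, by Katz's local argument. This forces the $[R_i]$ to be of finite order, not merely quasi-unipotent. We may therefore pass to the Kawamata cover $\bar Y$ of Section~\ref{ss:good}. There $\mathbb M$ extends to a local system on the projective variety $\bar Y$, its connection $(\mathcal M,\nabla)$ still has vanishing $p$-curvature for almost all $p$ (pullback preserves this), and $\mathbb L=q_*(\mathbb M)^G$. By Corollary~\ref{cor:cyclo}, $\mathbb L$ is defined over a cyclotomic field $K$. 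By Proposition~\ref{prop:sigma}, each $\mathbb L^\sigma$ for $\sigma\in{\rm Gal}(K/\mathbb Q)$ is privileged with respect to $([\sigma R_i],\delta^\sigma)$. By Theorem~\ref{thm:tpriv}, each $\mathbb M^\sigma$ underlies a polarized complex VHS. Since there are finitely many conjugates, we fix a model over a ring of $S$-integers of $K$ of $\bar Y$, $G$, and all the flat bundles $(\mathcal M^\sigma,\nabla^\sigma)$ together with their Hodge filtrations.

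The key step, which is the content of Theorem~\ref{thm:arith_frob}, compares two Frobenius actions for a prime $\mathfrak p$ of $K$ over $p\notin S$ with Frobenius $\sigma_p\in{\rm Gal}(K/\mathbb Q)$. The first is the Betti Frobenius, which sends $\mathbb L$ to $\mathbb L^{\sigma_p}$. The second is the crystalline Frobenius, which sends the crystal of $\mathbb L$ (mod $\mathfrak p$, log version) to its Frobenius pullback $F^*$. Via Ogus--Vologodsky, or Lan--Sheng--Zuo Higgs--de Rham flow, this pullback is again a flat connection with the same residues and the same tangent-space dimensions. To identify the two we argue with the moduli count. The tangent-space dimension is the $G$-invariant $H^1$ computed de Rham-wise, so for almost all $p$ it is preserved under reduction and under Frobenius pullback of crystals. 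Privilegedness singles out one point per tuple $([\sigma R_i],\delta^\sigma)$, and Frobenius twists the residue data by $\sigma_p$. It then follows that $F^*(\mathcal M,\nabla)\bmod\mathfrak p\cong(\mathcal M^{\sigma_p},\nabla^{\sigma_p})\bmod\mathfrak p'$. This is the step I expect to be the main obstacle. It needs (a) that privilegedness is detected after reduction mod $p$, meaning the mod-$p$ de Rham moduli with fixed residues has the same "unique point with given tangent dimension" property, which requires a spreading-out and constructibility argument; and (b) compatibility of crystalline Frobenius with the $G$-action and the residues.

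With this in hand, vanishing $p$-curvature means $(\mathcal M,\nabla)\bmod p\cong F^*(\mathcal M',\nabla_{\rm can})$ has a Frobenius descent. Iterating over the orbit of $\sigma_p$ exhibits the finite set $\{\mathcal M^\sigma\}$ as permuted by Frobenius, and a suitable power $F^{N}$ fixes each one. Following Katz's argument in \cite{Kat82}, vanishing $p$-curvature together with the crystalline identification shows that the Kodaira--Spencer maps ${\rm gr}_F\nabla$ of each VHS $\mathbb M^\sigma$ vanish mod $p$ for infinitely many $p$. Hence they vanish, so $\nabla^\sigma$ preserves the Hodge filtration. Each $\mathbb M^\sigma$ therefore has a single Hodge type and is unitary by Simpson \cite[Section~4]{Sim92}, as recalled in Remark~\ref{rmk:rootst}. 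Unitarity of all conjugates, together with integrality, gives finiteness of the monodromy of $\mathbb M$, and hence of $\mathbb L$, since $q$ is finite.
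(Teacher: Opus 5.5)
Your architecture is the paper's: finite local monodromy via Proposition~\ref{prop:finite_inf}, the Kawamata cover, and Galois conjugates that are again privileged and underlie polarized VHS (Theorem~\ref{thm:tpriv}). The key identification of the Betti Frobenius $\sigma_p$ with the crystalline Frobenius is Theorem~\ref{thm:arith_frob}, and unitarity of all conjugates plus integrality gives finiteness. The gap is the step from ``Frobenius permutes the crystals $\mathcal M^\sigma$'' to unitarity. You claim that, following Katz, vanishing $p$-curvature together with the crystalline identification forces the Kodaira--Spencer maps ${\rm gr}_F\nabla$ to vanish mod $p$. Katz's Gauss--Manin argument \cite{Kat72} uses the conjugate filtration and the Cartier isomorphism supplied by the geometry. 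That is, it uses a compatibility between Frobenius and the Hodge filtration, and this is what expresses the $p$-curvature through the Kodaira--Spencer map. Here nothing relates the Hodge filtration of $\mathbb M^\sigma$, which comes from Simpson's $\mathbb C^\times$-fixed-point argument, to the isomorphism $F^*\mathcal M\cong\mathcal M^{\sigma_p}$. Moreover, vanishing $p$-curvature alone puts no constraint on the second fundamental form of a Griffiths-transversal filtration: a non-horizontal subbundle of some $(F^*V,\nabla_{\rm can})$ is a counterexample. As written, this step has no mechanism.

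The missing idea is to apply the Frobenius identification to the underlying vector bundles instead. Reduce $F^*\mathcal M\cong\mathcal M^{\sigma_p}$ mod $p$; there the Frobenius lift becomes the absolute Frobenius, so $M^{\sigma_p}_s\cong F^*M_s$. Going round the $\sigma_p$-orbit gives $M_s\cong F^{N*}M_s$. Hence $M_s$ is semistable with vanishing numerical Chern classes for a dense set of $s$, and so is $M$ on $\bar Y$; this is the content of Corollary~\ref{cor:frob:N}. Characteristic-$0$ Simpson theory then concludes. In the polystable Higgs bundle $({\rm gr}\,M,\theta)$ the subsheaves $\bigoplus_{j<k}{\rm gr}^j$ are $\theta$-invariant, so $\deg \mathrm{Fil}^k\ge 0$, while semistability of $M$ gives $\deg\mathrm{Fil}^k\le 0$. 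These degree-$0$ $\theta$-invariant pieces split off as Higgs summands, which forces $\theta=0$, i.e.\ $\mathbb M^\sigma$ is unitary. A smaller point: you propose to run the privileged count after reduction mod $p$, which is delicate and unnecessary. The paper instead uses vanishing $p$-curvature to write $\mathcal E|_{X_{W(s)}}=F^*\mathcal B^\circ$ over $W(s)$, and does the count over the characteristic-zero field $K(s)$, where Fact~\ref{fact:dR} applies.
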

The theorem allows  the case of empty boundary $D=\varnothing$,  amounting to a projective smooth $\Cb$-scheme $X$. In this case  the set of conjugacy classes $[R_i]$ is empty. 

 Since we do not have a classification of all privileged local systems in higher dimensions, the proof of Theorem \ref{thm:main}, which will be given in Section~\ref{sec:proof}, forcibly has to differ from Katz's proof.  
\\ \ \\
Recall that by \cite[Theorem~8.1 (iii)]{Kat82} the following general statement  holds under a weaker assumption than the one of the Grothendieck-Katz Conjecture.
\begin{prop} \label{prop:finite_inf}
If an irreducible local system $\mathbb L$ has quasi-unipotent monodromies $[R_i]$ and has $p$-curvature equal to zero for a dense set of primes, then the $[R_i]$ have finite order.
\end{prop}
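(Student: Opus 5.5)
The plan is to reduce to a local question near a general point of each boundary component $D_i$, and there to use Katz's local analysis of regular singular connections with nilpotent $p$-curvature.

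\emph{Localizing.} First spread out $X$, $\bar X$, $D$ and the flat connection $(E,\nabla)$ associated with $\mathbb L$ (via Deligne's canonical extension) over a finitely generated $\Z$-algebra $S$. By Deligne's regularity the connection has regular singularities along $D$. Fix a component $D_i$ and a general point of it. Cutting $X$ by a general curve meeting $D_i$ transversally at that point, we may assume $d=1$. The local monodromy around $D_i$ is then unchanged, and the vanishing of the $p$-curvature is inherited by the restriction.

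\emph{Residues have rational eigenvalues.} Near the point, the canonical extension gives a logarithmic lattice with residue $\mathrm{Res}_i$ along $D_i$. Its eigenvalues $\alpha$ satisfy $\exp(-2\pi i\alpha)=$ eigenvalues of $R_i$. Since $R_i$ is quasi-unipotent, these eigenvalues lie in $\Q$, say in $[0,1)$ with common denominator $N$. The $p$-curvature $\psi_p$ is zero for the dense set of primes in question, so the $p$-curvature of the logarithmic lattice along $x\partial_x$ vanishes generically. Katz's local computation (the residue of the $p$-curvature of $x\partial_x$ is $\mathrm{Res}^p-\mathrm{Res}$ mod $p$) then shows that $\mathrm{Res}_i^p-\mathrm{Res}_i\equiv 0$ modulo these primes. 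For a semisimple residue this only recovers the rationality of the eigenvalues, which we already know. The real content is that it also forces $\mathrm{Res}_i$ to be semisimple.

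\emph{Semisimplicity of the residue (main obstacle).} This step rules out nontrivial unipotent parts of $R_i$, and I expect it to be the heart of the matter. I would argue as follows. Write $\mathrm{Res}_i=S+N'$ with $S$ semisimple and $N'$ nilpotent, commuting, and $N'\neq 0$ by assumption. Reduction mod $p$ gives $\mathrm{Res}^p-\mathrm{Res}=(S^p-S)+(N'^p-N')$, where $N'^p=0$ for $p>r$. On eigenspaces of $S$ with eigenvalues in $\frac1N\Z$ and $p\equiv 1\bmod N$ (the density hypothesis supplies such primes after restricting to a density set, or else one works with $\F_p$-rationality), we have $S^p=S$. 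So the $p$-curvature residue equals $-N'$ mod $p$. Since the spreading-out of $N'$ is nonzero modulo almost all $p$, this is nonzero for infinitely many $p$ in the set, which contradicts the vanishing. One must check that the residue of $\psi_p$ along the divisor is genuinely determined by the residue of the lattice; this is Katz's formula \cite[Theorem~8.1]{Kat82} in the logarithmic setting. Hence $N'=0$, and $R_i=\exp(-2\pi i\,\mathrm{Res}_i)$ is semisimple with roots-of-unity eigenvalues, i.e.\ of finite order.

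Finally, since $\mathbb L$ is irreducible, the conjugacy classes $[R_i]$ are well defined. Doing the argument for each $i$ proves the proposition.
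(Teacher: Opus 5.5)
Your argument is correct and is essentially the standard local residue argument behind Katz's result \cite[Theorem~8.1 (iii)]{Kat82}, which the paper cites without reproducing a proof. On the non-resonant canonical lattice, $\psi_p(x\partial_x)|_{x=0}=\mathrm{Res}^p-\mathrm{Res}$, so vanishing $p$-curvature kills the nilpotent part of the residue modulo infinitely many $p$; hence $N'=0$, and $R_i$ is semisimple with roots of unity as eigenvalues.

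One small correction: the restriction to $p\equiv 1 \bmod N$ is unnecessary, and if ``dense'' only means positive density, the given set of primes might contain no such $p$. Every $p$-integral rational $\alpha$ satisfies $\alpha^p\equiv\alpha \bmod p$, so $S^p\equiv S$ holds for all $p\nmid N$ outside a finite set, namely those where $S$ fails to stay diagonalizable. The argument therefore works for any infinite set of primes; this is the ``$\F_p$-rationality'' fallback you already mention.
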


\section{ Galois action of the monodromy field and crystalline Frobenius } \label{sec:galois}
\subsection{Galois action on the coefficients} \label{ss:Gal}
Let $\mathbb L$ be an irreducible local system with finite order $[R_i]$  along the $D_i$ and finite order determinant $\delta$. We denote by $K$ a cyclotomic field over which $\mathbb L$ is defined (Proposition~\ref{prop:sigma}).
For any unramified place $\frak{p}$ of $K$, the homomorphism
\ga{}{  \iota_\frak{p} \colon  \hat \Z\cdot \varphi_\frak{p} ={\mathsf{ Gal}}(K_\frak{p}/\Q_p)\to {\mathsf{ Gal}}(K/\Q) \notag}
 is defined up to conjugacy in ${\mathsf{ Gal}}(K/\Q)$, where $\varphi_\frak{p}$ is the arithmetic Frobenius of the residue field   $\F(\frak{p})$ 
 of $K_\frak{p}$. As $K$ is cyclotomic, ${\mathsf{ Aut}}(K/\Q)$ is  abelian,  thus  $\iota_\frak{p}$ is well-defined  and only depends on the prime $p$. In particular we can consider 
 $ \sigma_\frak{p}:= \iota_\frak{p}(\varphi_\frak{p}) \in {\mathsf{ Gal}}(K/\Q).$
 By definition, $ \varphi^{-1}_p$  acts on  $\sO_{K_\frak{p}}^\times$ as
 \ga{}{\varphi^{-1}_p \colon   \sO_{K_\frak{p}}^\times \to \sO_{K_\frak{p}}^\times, \ \lambda \mapsto  \lambda^{[1/p]}:=\tau\big( (  \lambda \ {\rm mod} \ \frak{p})^{1/p}\big), \notag}
 where $\tau \colon \F(\frak{p})^\times \to \mathcal O_{K_\frak{p}}^\times$ is the Teichm\"uller lift. 
 \\ \ \\
 In the following we write $K=\Q(\mu_n)$.   In particular, the orders of  the eigenvalues of the $[R_i]$ and the order of $\delta$ divide $n$. 
\begin{lem} \label{lem:arith=geom}
We choose the prime $p$ such that    $2$ and $n$ are not  divisible by $p$. Then
 $\sigma_\frak{p}^{-1}( R_i)=R_i^{1/p} \in {\mathsf{ GL}}_r(\mu_n(K))\subset {\mathsf{ GL}}_r(K) \subset {\mathsf{ GL}}_r(K_\frak{p})$  and $  \sigma_\frak{p}^{-1} \delta= \delta^{1/p} \in \mu_{n} (K)\subset K^\times \subset 
K_\frak{p}^\times.$

\end{lem}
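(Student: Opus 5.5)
The plan is to reduce everything to the action of $\sigma_\frak{p}$ on roots of unity $\mu_n(K)$, and to check that this action is the $p$-th power map. The matrices $R_i$ enter only through their eigenvalues. Since $R_i$ has finite order dividing $n$, it is diagonalizable with eigenvalues in $\mu_n$, so we may choose the representative $R_i$ of $[R_i]$ to be diagonal with entries in $\mu_n(K)$. Likewise $\delta$ takes values in $\mu_n(K)$. It therefore suffices to prove that $\sigma_\frak{p}^{-1}(\zeta)=\zeta^{1/p}$ for every $\zeta\in\mu_n(K)$. Here $\zeta^{1/p}$ is the unique $n$-th root of unity whose $p$-th power is $\zeta$; it is well defined because $p\nmid n$, so $x\mapsto x^p$ is a bijection of $\mu_n$. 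Applying this entrywise to $R_i$ and to the values of $\delta$ then gives both statements of the lemma.

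The key step is the classical identification of the Frobenius in $\mathsf{Gal}(\Q(\mu_n)/\Q)\cong(\Z/n)^\times$. Since $p\nmid n$, $p$ is unramified in $K$, so $\sigma_\frak{p}$ is the unique element satisfying $\sigma_\frak{p}(\lambda)\equiv\lambda^p \bmod \frak{p}$ for all $\lambda\in\sO_K$. For $\zeta\in\mu_n$, both $\sigma_\frak{p}(\zeta)$ and $\zeta^p$ lie in $\mu_n$. Reduction mod $\frak{p}$ is injective on $\mu_n$ when $p\nmid n$: indeed $\prod_{1\ne\eta\in\mu_n}(1-\eta)=n$ is a $\frak{p}$-adic unit, so distinct roots of unity have distinct reductions. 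Hence $\sigma_\frak{p}(\zeta)=\zeta^p$, and so $\sigma_\frak{p}^{-1}(\zeta)=\zeta^{1/p}$.

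To match the formula $\varphi_p^{-1}(\lambda)=\tau((\lambda \bmod \frak{p})^{1/p})$ used in the paper, note that the Teichm\"uller lift restricted to $\mu_n\subset\sO_{K_\frak{p}}^\times$ is the identity on roots of unity of order prime to $p$. It also commutes with extracting $p$-th roots in $\F(\frak{p})^\times$. So the two descriptions agree on $\mu_n$.

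I expect no real obstacle. The only points needing care are the choice of a diagonal representative of $[R_i]$ with entries in $\mu_n(K)$, and the injectivity of reduction on $\mu_n$. The hypothesis $p\ne 2$ is not needed for this computation; it is presumably imposed for later use. The hypothesis $p\nmid n$ is exactly what makes $\zeta\mapsto\zeta^{1/p}$ well defined and gives unramifiedness.
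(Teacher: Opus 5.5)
Your proof is correct and follows the same route as the paper: the paper's proof is the one-line observation that for $\lambda\in\mu_n(K)$ the Teichm\"uller description of $\varphi_p^{-1}$ gives $\lambda^{[1/p]}=\lambda^{1/p}$, i.e.\ Frobenius acts on prime-to-$p$ roots of unity by the $p$-th power map. Your injectivity of reduction on $\mu_n$ (via $\prod_{\eta\neq 1}(1-\eta)=n$) supplies the justification the paper leaves implicit, and your choice of a diagonal representative of $[R_i]$ and your remark that $p\neq 2$ plays no role here are both accurate.
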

\begin{proof} \label{lem:sigmap}

 Let $\lambda \in \mu_n(K) \subset \mu_n(K_\frak{p})$. Then 
 $ \lambda^{[1/p]}=\lambda^{1/p}\in \mu_n(K)\subset \mu_n(K_\frak{p}). $ 
 
 \end{proof}
\begin{cor}
If  $\mathbb L$ is privileged with respect to $[R_i]$, resp.  $([R_i], \delta)$, then $ \mathbb L^{\sigma_{\frak{p}}^{-1}}$ is privileged with respect to  $[R_i^{1/p}]$, resp. $([R_i^{1/p}], \delta^{1/p})$. 

\end{cor}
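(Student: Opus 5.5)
This is a direct combination of Proposition~\ref{prop:sigma} and Lemma~\ref{lem:arith=geom}. The plan is to apply Proposition~\ref{prop:sigma} to a field automorphism extending $\sigma_{\frak p}^{-1}$, and then use Lemma~\ref{lem:arith=geom} to identify the transformed boundary data.

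First I fix an extension. The element $\sigma_{\frak p}^{-1}\in {\mathsf{Gal}}(K/\Q)$ with $K=\Q(\mu_n)$ extends to some $\sigma\in{\mathsf{Aut}}(\C)$, since $K\subset\C$ is Galois over $\Q$ and automorphisms of subfields of $\C$ extend. The local system $\mathbb L$ is defined over $K$ by Corollary~\ref{cor:cyclo}, and the $[R_i]$ and $\delta$ have entries in $\mu_n(K)$. Hence $\mathbb L^\sigma$, $[\sigma(R_i)]$ and $\delta^\sigma$ depend only on $\sigma|_K=\sigma_{\frak p}^{-1}$. I therefore write $\mathbb L^{\sigma_{\frak p}^{-1}}$ for $\mathbb L^\sigma$; this is independent of the chosen extension.

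Next I apply Proposition~\ref{prop:sigma} to $\sigma$. It says that $\mathbb L^{\sigma}$ is privileged with respect to $[\sigma(R_i)]$, resp. $([\sigma(R_i)],\delta^\sigma)$.

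Finally I identify the data. By Lemma~\ref{lem:arith=geom}, which uses $p\nmid 2n$, we have $\sigma(R_i)=\sigma_{\frak p}^{-1}(R_i)=R_i^{1/p}$ and $\delta^\sigma=\delta^{1/p}$. Here $x\mapsto x^{1/p}$ is the well-defined inverse of the bijection $x\mapsto x^p$ on $\mu_n$, applied to each eigenvalue. Since $R_i$ has finite order dividing $n$, it is diagonalizable with eigenvalues in $\mu_n$, so $R_i^{1/p}$ is a well-defined matrix and its conjugacy class is $[\sigma(R_i)]$. Substituting these identities into the conclusion of the previous step gives the statement.

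The only point needing care is that $R_i^{1/p}$ be a meaningful conjugacy class. This holds because the $[R_i]$ are of finite order, which is the standing assumption of Section~\ref{ss:Gal}; in the quasi-unipotent setting it is guaranteed by Proposition~\ref{prop:finite_inf}. With that in place, the rest is formal.
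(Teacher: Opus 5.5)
Your proof is correct and follows the paper's argument. The paper likewise combines Proposition~\ref{prop:sigma}, applied to $\sigma_{\frak p}^{-1}$, with Lemma~\ref{lem:arith=geom}, which gives $[\sigma_{\frak p}^{-1}(R_i)]=[R_i^{1/p}]$ and $\delta^{\sigma_{\frak p}^{-1}}=\delta^{1/p}$, and it cites Proposition~\ref{prop:finite_inf} only to ensure the $[R_i]$ have finite order, as in your closing remark; your points on extending $\sigma_{\frak p}^{-1}$ to $\mathsf{Aut}(\mathbb C)$ and on $R_i^{1/p}$ being well defined up to conjugacy are details the paper leaves implicit.
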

\begin{proof}
We combine Proposition~\ref{prop:finite_inf}, Proposition~\ref{prop:sigma} and  Lemma~\ref{lem:arith=geom}.
\end{proof}

  Given $\mathbb L$ privileged with respect to $[R_i]$  resp. $([R_i], \delta)$ with $[R_i]$ of finite order, we consider the Deligne extension 
  $\mathcal E$ on $\bar X$  of its connection on $X$ defined by the Riemann-Hilbert correspondence  \cite[Remarque~5.5]{Del70}. We simplify the notation here setting $\mathbb L'=\mathbb L^{\sigma_p^{-1}}$.
  Similarly we consider for  $\mathcal E'$
 and $\mathbb L^{\sigma_p^{-1}},$  $\mathcal N$ and $\mathcal N'$ for the ones associated to $\mathbb M$ and $\mathbb M^{\sigma_\frak{p}^{-1}}$. In particular,  by the compatibility of the Deligne extension with push-forward we have
 \ga{}{  (\bar q_* \mathcal N)^G= \mathcal E, \ (\bar q_* \mathcal N')^G= \mathcal E'.  \notag}
  As $\mathcal N$ is $G$-invariant,  by the projection formula on a dense  open of $X$ on which $q$ is étale, and from there taking the Deligne extension denoted by $\mathcal D$,  we have
  \ga{}{ \bar q_* (\mathcal N^\vee\otimes \mathcal N)^G= \mathcal D \mathcal E nd(\mathcal E), \ 
  \bar q_* (\mathcal N^{'\vee}\otimes \mathcal N')^G=\mathcal D \mathcal E nd(\mathcal E').
   \notag}
 
  \begin{fact} \label{fact:dR}  The equations
  \ga{}{ H^1_{dR}(\bar Y,   \mathcal N^\vee\otimes \mathcal N)^G
  =H^1(\bar X, j_{!*} \mathbb L^\vee\otimes \mathbb L)=  
  H^1(\bar X, j_{!*} \mathbb L^{'\vee}\otimes \mathbb L')
=
   H^1_{dR}(\bar Y,   \mathcal N^{'\vee}\otimes \mathcal N' )^G    \notag
  \notag}
hold, and similarly in the respective case taking the trace free part of the endomorphisms.
  \end{fact}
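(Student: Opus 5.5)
The chain of equalities in Fact~\ref{fact:dR} has three links, and I will treat them separately: two outer ones of Riemann--Hilbert type and a middle one that compares $\mathbb L$ with its Galois twist $\mathbb L'=\mathbb L^{\sigma_\frak{p}^{-1}}$.

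For the first equality, $\mathbb M$ extends to a local system on the smooth projective $\bar Y$, and $\mathcal N$ is its associated flat bundle, which has no poles. The analytic de Rham theorem together with GAGA therefore gives a $G$-equivariant isomorphism
\[
H^1_{dR}(\bar Y,\mathcal N^\vee\otimes\mathcal N)\cong H^1(\bar Y,\mathcal E nd(\mathbb M)).
\]
This is equivariant because $G$ acts on $\bar Y$ and on $\mathcal N$ compatibly with the connection. Taking $G$-invariants, Lemma~\ref{lem:coh} identifies the right-hand side with $T_{\mathbb L}M_B(X,r,[R_i])$, and by the displayed formula before Definition~\ref{defn:phys_rig} this equals $H^1(\bar X, j_{!*}\mathcal E nd(\mathbb L))$.

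The last equality is the same argument applied to $\mathbb M^{\sigma_\frak{p}^{-1}}$, $\mathcal N'$ and $\mathbb L'$. This is legitimate for two reasons. First, by Lemma~\ref{lem:arith=geom} the monodromies $R_i^{1/p}$ of $\mathbb L'$ still have finite order dividing $n$, so the same Kawamata cover $\bar Y$ kills them. Second, $q^*\mathbb L'=(q^*\mathbb L)^{\sigma}$ extends as $\mathbb M^{\sigma}$ over $\bar Y$.

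The middle equality has to be read as an equality of dimensions, that is, an abstract isomorphism of complex vector spaces. I see two routes to it.

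The first route uses that $\sigma_\frak{p}^{-1}$ acts on coefficients. It induces a $\sigma$-semilinear bijection
\[
H^1(\bar X, j_{!*}\mathcal E nd(\mathbb L))\to H^1(\bar X, j_{!*}\mathcal E nd(\mathbb L'))
\]
as in the proof of Proposition~\ref{prop:sigma}, after extending $\sigma_\frak{p}$ to an automorphism of $\C$. The two spaces therefore have equal dimension.

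The second route goes through the moduli spaces. By the Corollary following Lemma~\ref{lem:arith=geom}, $\mathbb L'$ is the privileged point of $M_B(X,r,[R_i^{1/p}])$. Since $M_B(X,r,[R_i])^{\sigma}=M_B(X,r,[R_i^{1/p}])$, the isomorphism of schemes induced by $\sigma$ maps tangent spaces to tangent spaces.

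The step needing the most care is the $G$-equivariance and compatibility with residues on the de Rham side when passing to $\mathbb L'$. Concretely, $\mathcal N'$ must be the Deligne extension attached to $\mathbb M^{\sigma_\frak{p}^{-1}}$, and the identities $(\bar q_*\mathcal N')^G=\mathcal E'$ and $\bar q_*(\mathcal N'^\vee\otimes\mathcal N')^G=\mathcal D\mathcal E nd(\mathcal E')$ must hold. These follow exactly as for $\mathcal N$.

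The trace-free case follows by splitting off the summand $H^1(\bar Y,\C)^G=H^1(\bar X,\C)$ on both sides, as in the proof of Lemma~\ref{lem:coh}.
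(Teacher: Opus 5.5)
Your proposal is correct and follows the paper's own argument. The outer equalities come from the de Rham comparison on $\bar Y$, where $\mathcal N$ and $\mathcal N'$ have no poles, combined with Lemma~\ref{lem:coh}; the middle equality is the $\sigma_{\frak{p}}^{-1}$-semilinear identification on coefficients, which gives equality of dimensions. Your extra remarks (that the same cover $\bar Y$ kills the conjugated monodromies, and the optional moduli-space route) are harmless, while the compatibilities $(\bar q_*\mathcal N')^G=\mathcal E'$ that you flag as the delicate step are not actually needed for this Fact.
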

  \begin{proof}
  We apply Lemma~\ref{lem:coh} and the Riemann-Hilbert correspondence on $\bar Y$ to obtain the equality of the first two and the last two terms. The two middle terms are identified with $\sigma_{\frak{p}}^{-1}$.

  \end{proof}

 \subsection{De Rham moduli} \label{ss:dRst}
 We fix the $[R_i]$ of finite order, and some finite order character $\delta$. 
 For  each $i$, we give ourselves a set $\Gamma_i$  of $r$ rational numbers $\lambda_{ij}, \ j=1,\ldots, r $ such that no two differ by a non-zero integer and such that  $\{{\rm exp} 2\pi \sqrt{-1} \lambda_{ij}, \ j=1,\ldots, r\}$ is the set of eigenvalues of $[R_i]$.   The specific Deligne choice $0\le  \lambda_{ij}<1$ is denoted by $\Gamma_i^{\mathcal D}$. 
    We fix a rank one connection $\mathcal L$ with log poles along $D$ and residue along $D_i$ equal to ${\rm Tr}(\Gamma_i)$ such that $\mathcal L|_X^\nabla=\delta$.  For $\Gamma_i^{\mathcal D}$ we denote it by $\mathcal L^{\mathcal D}$.  If we twist $[R_i]$ to $[R^\sigma_i]$ by $\sigma\in \mathsf{Gal}(K/\mathbb Q)$, we write $\Gamma_i^\sigma, \mathcal L^\sigma$. 
 We 
  define the stack $\mathcal M_{dR}(X,r, \Gamma_i)$ respectively
 $\mathcal M_{dR}(X,r, \Gamma_i ,\mathcal L)$ of locally free connections with log poles along $D$ and eigenvalues of the residues being the sets $\Gamma_i$, resp. additionally with determinant $\mathcal L$. 
 The   complex analytic  Riemann-Hilbert map $
 \mathcal M_{dR}(X,r, \Gamma_i)(\mathbb C)\to M_B(X,r, [R_i])(\mathbb C)$ respectively $ \mathcal M_{dR}(X,r, \Gamma_i, \mathcal L)(\mathbb C)\to M_B(X,r, [R_i], \delta)(\mathbb C)$
 sending 
  $ \mathcal A=(A,\nabla)$ to  $\mathbb L=A|_X^\nabla$ is by   \cite[Corollary~5.6]{Del70} a bijection on points.
 Likewise, the restriction  $\mathcal M_{dR}(X,r, \Gamma_i) \to \mathcal M_{dR}(X,r)$ to the stack of regular singular rank $r$ connections on $X$  is an isomorphism onto its image.

In particular,  a privileged  $\mathbb L$ with respect to $[R_i]$ respectively $([R_i], \delta)$ yields  the only point of $\mathcal M_{dR}(X,r, \Gamma_i)$ with tangent space of dimension
${\rm dim}_{\mathbb C} H^1(\bar Y, \mathcal E nd(\mathbb M))^G$, respectively 
of $\mathcal M_{dR}(X,r, \Gamma_i, \delta)$ with tangent space of dimension
${\rm dim}_{\mathbb C} H^1(\bar Y, \mathcal E nd^0(\mathbb M))^G$, see Fact~\ref{fact:dR}.

Finally,  for $\Gamma_i^{\mathcal D}$, the connection  $\mathcal A$ is 
  Deligne's extension $\mathcal E$ and in all cases $\mathcal A|_X=\mathcal E|_X$.

  \subsection{Good models} \label{ss:good_mod} We fix $[R_i]$ of finite order. 
 Let $S\to {\rm Spec}( \Z)$ be a smooth morphism, such that  $j$ has a good model 
 $X_S \hookrightarrow \bar X_S$ where $ D_S=\bar X_S\setminus X_S=
\cup_i D_{i,S}$  is a strict relative normal crossings divisor, as well as $Y_S\hookrightarrow \bar Y_S$ with relative normal crossings divisor $\bar q^{-1}(D_S)\cup R_S$ and such that the characteristic of all the closed  points of $S$  verify the conditions in Lemma~\ref{lem:arith=geom}. We assume $\mathbb L$ is privileged with respect  to $[R_i]$ respectively $([R_i],\delta)$. 
We assume in addition that
 we have  good models $\mathcal E_S$, $\mathcal E'_S
$, $\mathcal N_S$
$\mathcal N'_S$, of   their restriction to $X_{W(s)}, Y_{W(s)}$ and that we have base change for the de Rham cohomologies occurring in   Fact~\ref{fact:dR} on all $\mathcal \in M_{dR}(\bar Y, r)$ invariant by $G$. 

Let $s\in {\rm Spec}(S)$ be a closed point of characteristic $p$. 
  We require in addition that   $\mathcal E$  in
   $\mathcal M_{dR}( X, r,  \Gamma_i^{\mathcal D})$ respectively $\mathcal M_{dR}( X ,r,  \Gamma_i^{\mathcal D} ,\mathcal L^{\mathcal D})$  and   $\mathcal E'$ in $\mathcal M_{dR}( X, r,  \Gamma_i^{' \mathcal D})$ respectively $\mathcal M_{dR}( X ,r,  \Gamma_i^{ ' \mathcal D} ,\mathcal L^{' \mathcal D})$
  on $\bar X$, extend to   closed $S$-sections 
$\mathcal{E}_S , \mathcal{E}'_S$  in $ \mathcal M_{dR}( X,r, \Gamma^{\mathcal D}_i)_S$, $
 \mathcal M_{dR}( X,r, \Gamma^{' \mathcal D}_i)_S$, 
  respectively in $ \mathcal M_{dR}(X,r,  \Gamma^{\mathcal D}_i, \mathcal L^{\mathcal D})_{S},
  \mathcal M_{dR}(X,r,  \Gamma^{' \mathcal D}_i, \mathcal L^{' \mathcal D})_{S}
  $.
\\ \ \\
Recall from  \cite[Section~3.1]{EG25} that the Frobenius pullback $F^*$  of connections on vector bundles on $X_{W(s)}$ are defined uniquely by  the Frobenius pullback of lifts on formal neighbourhoods in  $\bar X_{W(s)}$ of the geometric Frobenius on $\bar X_s$  if the characteristic of $s$ is $>2$,  and that residues of log connections are  also recognized uniquely by their restriction to formal neighbourhoods of closed points.

\begin{thm} \label{thm:arith_frob}
Let  $\mathbb L$  be a privileged local system with respect to $[R_i]$ respectively $([R_i], \delta)$ with $[R_i]$ and $\delta$ of finite order.  Then if $\mathcal E|_{X_s}$ has vanishing $p$-curvature, then 
$ F^* \mathcal E_{W(s)}'|_{X_{W(s)}}=\mathcal E_{W(s)}|_{ X_{W(s)}}. $
\end{thm}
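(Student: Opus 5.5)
The idea is to show that $F^*\mathcal E'_{W(s)}$ is again a point of the same de Rham moduli as $\mathcal E_{W(s)}$, with the same tangent space dimension, and then invoke privilegedness in characteristic $0$ via a lifting/specialization argument.

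\emph{Step 1: Frobenius pullback lands in the right moduli.} Let $F$ be a Frobenius lift on formal neighbourhoods in $\bar X_{W(s)}$, as recalled from \cite[Section~3.1]{EG25}. Pulling back the log connection $\mathcal E'_{W(s)}$, whose residues along $D_i$ have eigenvalues in $\Gamma_i^{'\mathcal D}$, multiplies these residues by $p$. Lemma~\ref{lem:arith=geom} gives $R'_i=R_i^{1/p}$, so these eigenvalues are $p\lambda'_{ij}$ with $\exp(2\pi\sqrt{-1}p\lambda'_{ij})$ running over the eigenvalues of $R_i$. Thus $F^*\mathcal E'_{W(s)}$ is a log connection with residue set $p\Gamma'^{\mathcal D}_i$, which differs from $\Gamma_i^{\mathcal D}$ by integers. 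The determinant is handled the same way: $F^*\mathcal L^{'\mathcal D}$ and $\mathcal L^{\mathcal D}$ agree on $X$, using $\delta'=\delta^{1/p}$. Restricted to $X_{W(s)}$, the connection $F^*\mathcal E'_{W(s)}|_{X_{W(s)}}$ therefore lies in the image of $\mathcal M_{dR}(X,r,p\Gamma'^{\mathcal D}_i)$. By the isomorphism onto its image recalled in Section~\ref{ss:dRst}, it makes sense to compare it with $\mathcal E$ on $X$. To make this concrete, I would pass to $\bar Y$: $F^*\mathcal N'$ has no poles there, and $G$-invariance is preserved because $G$ commutes with a suitably equivariant local Frobenius lift.

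\emph{Step 2: equal cohomology dimensions.} Frobenius pullback is compatible with de Rham cohomology via the crystalline structure. On $\bar Y_{W(s)}$, $F^*$ induces an isomorphism after inverting $p$ between $H^1_{dR}(\bar Y,\mathcal N^{'\vee}\otimes\mathcal N')$ and $H^1_{dR}$ of $F^*(\mathcal N'^\vee\otimes\mathcal N')$; this is the Frobenius structure on crystalline cohomology of the crystal attached to $\mathcal N'$. It is $G$-equivariant. Combined with Fact~\ref{fact:dR} and base change, it shows that the $G$-invariant $H^1$ of $\mathcal End(F^*\mathcal N')$ has the same dimension as that of $\mathcal End(\mathcal N)$, and likewise for the trace-free part.

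\emph{Step 3: conclude via privilegedness, using the $p$-curvature hypothesis.} Vanishing $p$-curvature of $\mathcal E|_{X_s}$ means $\mathcal E_s\cong F_s^*\mathcal E_1$ for some connection $\mathcal E_1$ (Cartier descent). I would then identify $\mathcal E_1$ with $\mathcal E'_s$ by counting: privileged points are unique with their tangent-dimension, and the $p$-curvature-zero locus is preserved. This gives agreement mod $p$. To lift to $W(s)$, I would use that both $F^*\mathcal E'_{W(s)}$ and $\mathcal E_{W(s)}$ are lifts, over $W(s)$, of the same object in a moduli space where the privileged point is the unique point with its tangent dimension. Both $W(s)$-points specialize to the same $S$-section, so they coincide by separatedness and uniqueness of the section.

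\emph{Main obstacle.} The hardest step is Step 3's passage from mod $p$ to $W(s)$. The difficulty is that $F^*\mathcal E'_{W(s)}$ is defined over $W(s)$ and not over $S$, so privilegedness in characteristic $0$ does not apply directly. I would first try to realize $F^*\mathcal E'_{W(s)}[1/p]$ as a $\overline{\mathbb Q}_p$-point of $\mathcal M_{dR}(X,r,\Gamma_i)$. Its tangent dimension equals that of $\mathcal E$ by Step 2, so via Riemann–Hilbert over $\mathbb C$, after choosing an embedding $\overline{\mathbb Q}_p\hookrightarrow\mathbb C$, it must be the privileged point $\mathcal E$.
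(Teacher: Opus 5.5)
Your proposal has a genuine gap at the integral identification over $W(s)$, which is exactly where the $p$-curvature hypothesis has to enter. What Steps~1--2 and your ``main obstacle'' route can actually deliver is an isomorphism $F^*\mathcal E'_{K(s)}\cong\mathcal E_{K(s)}$ after inverting $p$. The route is: algebraize $F^*\mathcal E'$ on $\bar X_{W(s)}$, compare tangent dimensions via the Frobenius isomorphism on $H^1$ after inverting $p$, then apply privilegedness of $\mathbb L$ over $\mathbb C$. That is Remark~\ref{rmk:int}, which needs no $p$-curvature assumption. The theorem asserts an equality of crystals over $W(s)$, and Corollary~\ref{cor:frob:N} depends on that integrality.

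The devices you offer to get from there to $W(s)$ do not work.
\begin{itemize}
\item Privilegedness is a property of the complex Betti moduli space and gives no uniqueness in the moduli of connections on $X_s$. So ``identify $\mathcal E_1$ with $\mathcal E'_s$ by counting'' has no basis. (Also, mod $p$ the Cartier antecedent is only a vector bundle, since $F_s^*$ forgets the connection.)
\item Separatedness gives uniqueness of the extension of a generic-fibre point, not uniqueness of lifts of a special-fibre point. Two $W(s)$-points with the same reduction need not coincide, and $F^*\mathcal E'_{W(s)}$ is not the restriction of any $S$-section anyway.
\item A lattice argument in your direction does not close the gap either. Normalizing the rational isomorphism to be integral and nonzero mod $p$ gives a nonzero horizontal map from $F_s^*$ of the bundle underlying $\mathcal E'_s$ to $\mathcal E_s$. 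To force it to be an isomorphism you would need semistability of that bundle. In the paper this is a consequence of the theorem (Corollary~\ref{cor:frob:N}), not an input.
\end{itemize}

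The missing idea is to run the argument in the opposite direction, so that the hypothesis itself produces the integral object.
\begin{itemize}
\item The paper uses vanishing $p$-curvature of $\mathcal E|_{X_s}$ to get a Frobenius antecedent over $W(s)$: a connection $\mathcal B^\circ$ on $X_{W(s)}$ with $\mathcal E|_{X_{W(s)}}=F^*\mathcal B^\circ$.
\item It extends this to a log connection $\mathcal B$ by letting $B\subset j_*B^\circ$ consist of the sections $\tau$ with $F^*\tau\in E_{W(s)}$.
\item The local lift $x_i\mapsto x_i^p$ multiplies residues by $p$. Together with Lemma~\ref{lem:arith=geom}, this shows the residue exponents of $\mathcal B$ are admissible for $R_i^{1/p}$.
\item The $H^1$ comparison (your Step~2 combined with Fact~\ref{fact:dR}) shows $\mathcal B$ has the tangent dimension of the privileged point.
\item Privilegedness of $\mathbb L'=\mathbb L^{\sigma_{\mathfrak p}^{-1}}$, not of $\mathbb L$, then identifies $\mathcal B$ with $\mathcal E'$.
\end{itemize}
Integrality is built in on one side, because $\mathcal E=F^*\mathcal B$ holds over $W(s)$ by construction. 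On the other side, the identification $\mathcal B\cong\mathcal E'$ is made integral by stability: in the projective case, $\mathcal E_s=F^*B_s$ stable forces $B_s$ stable. Your Steps~1--2 are usable ingredients, but without this descent step the argument stops at Remark~\ref{rmk:int}.
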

  \begin{proof}
  The vanishing of the $p$-curvature of $\mathcal E|_{X_s}$ implies that there is a locally free connection $\mathcal B$ on $X_{W(s)}$ such that $\mathcal E|_{X_{W(s)}}=F^* \mathcal B^\circ$.  We want to identify $\mathcal  B^\circ$ with $\mathcal E'_{W(s)}|_{X_{W(s)}}$.  Set $\mathcal B^\circ=(B^\circ, \nabla^\circ)$. We define  $B\subset j_{W(s) *} \mathcal B^\circ$ as the subsheaf of local sections $\tau$ such that $F^*\tau\in E_{W(s)}$. As $F$ is flat,  $E_{W(s)}$ is locally free and has log poles along $D_{W(s)}$,  the same holds for  $\mathcal B=(B, \nabla^\circ|_B)=(B, \nabla)$.  We want to identifiy $\mathcal B$. We denote by $\Gamma_i^{\mathcal B}$  the eigenvalues of its residues and by $\mathcal L^{\mathcal B}$ its determinant. 
  
  \medskip
  
We first assume that $X=\bar X$.
As  ${\rm dim}_{K(s)}H^1(X, \mathcal E nd(\mathcal  B))= {\rm dim}_{K(s)} F^* 
H^1(X, \mathcal E nd(\mathcal  B))$ where $K(s)={\rm Frac}(W(s))$, 
respectively ${\rm dim}_{K(s)}H^1(X, \mathcal E nd^0(\mathcal  B))= {\rm dim}_{K(s)} F^* 
H^1(X, \mathcal E nd^0(\mathcal  B))$,
we conclude that 
$F^* \mathcal B$ on $X_{K(s)}$ is equal to $\mathcal E$ on $X_{K(s)}$. As $\mathcal E|_{X_s}=F^*B|_{X_s}$ is stable, so is $B|_{X_s}$ thus  $\mathcal E'=\mathcal B$.
\\ \ \\
We now assume that $D\neq \varnothing$. 
In a formal neighbourhood $\mathcal U$ of a $W(s)$-point $a$ of $D_{i,W(s)} \setminus \cup_{j\neq i} D_{j,W(s)}$  defined by $x_i=0$, and  in a good basis,
   $ \mathcal B $   has  the  matrix $ \Psi_i d \log x_i$  where $\Psi_i$ is semi-simple with eigenvalues $\Gamma_i^{ \mathcal B}$ in  
  $ (\Z_p \cap \mathbb Q)^r$.
      The absolute Frobenius $F$ of $\bar X_s$ lifts to $F_{\mathcal U}$ on  $\mathcal U$ sending $x_i$ to $x_i^p$. Thus  
   \ga{}{  F_{\mathcal U}^* \Psi_i d\log x_i=p\Psi_i d\log x_i \ {\rm and} \ p\Gamma_i^{\mathcal B}=\Gamma_i^\mathcal D. \notag}
  By Lemma~\ref{lem:arith=geom} it implies that $\Gamma_i^{\mathcal B}$ is one of the possible $\Gamma'_i$ in Section~\ref{ss:dRst}.
     By the good reduction assumption for $H^1$ we have
$H^1_{dR}(\bar Y_{W(s)},   \mathcal N_{W(s)}^{\vee}\otimes \mathcal N_{ W(s)} )^G=
H^1_{dR}(\bar Y_{W(s)},   \mathcal N_{W(s)}^{'\vee}\otimes \mathcal N'_{ W(s)} )^G
 $
and the latter is equal to $F^* H^1_{dR}(\bar Y_{W(s)},   \mathcal N_{W(s)}^{'\vee}\otimes \mathcal N'_{ W(s)} )^G$.
     This implies that $\mathcal B^\circ$ has to be one of the crystals associated to the privileged local systems and this crystal has to be $\mathcal E'|_{X_{W(s)}}$. One argues similary in the trace free case with fixed determinant. 
\end{proof}

\begin{rmk} \label{rmk:int}
The vanishing of the $p$-curvature of $X_s$ is used to obtain Theorem~\ref{thm:arith_frob} integrally. 
Without this assumption, the same proof yields that for an  irreducible  local system $\mathbb L$, privileged with respect to  $[R_i]$ resp. $([R_i], \delta)$ with $[R_i]$  and $\delta$ of finite order, then  
$ F^* \mathcal E_{K(s)}'|_{X_{K(s)}}=\mathcal E_{W(s)}|_{ X_{K(s)}}. $
\end{rmk}

 \begin{cor} \label{cor:frob:N} Under the assumptions of Theorem~\ref{thm:arith_frob}
 it holds 
 \begin{itemize}
 \item[1)]
  $ F^* \mathcal N'_{W(s)}=\mathcal N_{W(s)}, \ 
  (\bar q_*F^* \mathcal N'_{W(s)})^G=\mathcal E_{W(s)}
     .$
   \item[2)]  $N'_s$ on $\bar Y_s$ is a sum of stable vector bundles with vanishing numerical Chern classes.
   \item[3)] $N'$ on $Y$ is a sum of stable vector bundles with vanishing numerical Chern classes.
   \end{itemize}
 \end{cor}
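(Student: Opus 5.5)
For 1), I would transport Theorem~\ref{thm:arith_frob} to the Kawamata cover $\bar Y$. Pulling back along $q$, the theorem gives $F^*\mathcal N'_{W(s)}|_{Y_{W(s)}}\cong \mathcal N_{W(s)}|_{Y_{W(s)}}$. Here I use that on $Y$ one has $q^*\mathcal E|_{X}=\mathcal N|_Y$, the Frobenius lift commutes with the étale-locally defined $q$ on the good model, and $F^*$ is characterised on formal neighbourhoods as in \cite[Section~3.1]{EG25}.

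Both $\mathcal N$ and $\mathcal N'$ have trivial residues along $\bar q^{-1}(D)\cup R$, since $\mathbb M$ and $\mathbb M^{\sigma_\frak{p}^{-1}}$ extend to $\bar Y$. They are therefore honest flat bundles on $\bar Y_{W(s)}$, and so is $F^*\mathcal N'_{W(s)}$, which is computed locally with a Frobenius lift $y_i\mapsto y_i^p$. Two flat bundles without poles on $\bar Y_{W(s)}$ that agree on $Y_{W(s)}$ agree on $\bar Y_{W(s)}$. This follows from Hartogs across the codimension-$\ge 2$ singular locus, together with the fact that along the smooth part of the boundary the lattice is forced to be the unique extension with nilpotent (here zero) residue. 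Taking $G$-invariants of $\bar q_*$ and using $(\bar q_*\mathcal N)^G=\mathcal E$ then yields the second identity in 1). The $G$-equivariance of the identification comes from uniqueness: $G$ acts on $\bar Y_S$ compatibly with $F$, and by irreducibility of $\mathbb L$ the isomorphism is determined up to scalars on each $G$-stable summand.

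For 2), I would use Theorem~\ref{thm:tpriv} applied to both $\mathbb L$ and $\mathbb L'$; the latter is privileged by the Corollary following Lemma~\ref{lem:arith=geom}. Hence $\mathbb M'$ underlies a polarized complex VHS, so its associated graded Higgs bundle is polystable with vanishing Chern classes. However, I want the statement for $N'$ itself, not for the graded object. The route I would try is the cyclicity of Frobenius. Iterating, with $\mathbb L^{(k)}:=\mathbb L^{\sigma_\frak{p}^{-k}}$, all of which are privileged for the Galois-twisted data, the set $\{\mathcal N^{(k)}\}$ is finite because $\mathsf{Gal}(K/\Q)$ is finite. Since $\sigma_\frak{p}$ has finite order $m$, we get $\mathcal N_{W(s)}\cong F^{m*}\mathcal N_{W(s)}$ up to applying 1) $m$ times. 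Hence $N_s\cong F^{m*}N_s$ on $\bar Y_s$, and likewise for $N'_s$.

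A vector bundle isomorphic to its own $m$-fold Frobenius pullback is étale-trivialisable by Lange–Stuhler. Its numerical Chern classes therefore vanish, and it is strongly semistable. Stability of the summands comes from the $F$-periodicity together with semisimplicity: $\mathbb M$ is semisimple by Clifford theory, and stability of the reductions is an open condition I can impose on $S$. I would arrange that each irreducible summand of $\mathbb M^{(k)}$ has stable reduction by shrinking $S$, as in the case $X=\bar X$ of the proof of Theorem~\ref{thm:arith_frob}.

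For 3), I would lift to characteristic zero. The étale trivialisation of $N'_s$ gives a representation of $\pi_1^{\rm et}(\bar Y_s)$ to $\mathsf{GL}_r$ of a finite field. Alternatively, semicontinuity argues more directly: Chern classes are constant in the flat family $N'_S$, so numerical Chern classes vanish on $\bar Y$. Stability is open in families, so a summand of $N'$ on $\bar Y$ is stable if its reduction is; here I use that the summand decomposition of $N'$ specialises to that of $N'_s$, after shrinking $S$. The main obstacle I expect is in 2): justifying that the Frobenius orbit closes up, i.e.\ that $\sigma_\frak{p}$-iterates return exactly to $\mathcal N$ integrally rather than only over $K(s)$ (compare Remark~\ref{rmk:int}). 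This requires the $p$-curvature vanishing for every $\mathcal E^{(k)}$, and I would obtain it from the Galois invariance of the hypothesis on almost all primes.
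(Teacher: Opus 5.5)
Your argument for 1) is the paper's: pull the identity of Theorem~\ref{thm:arith_frob} back to $Y_{W(s)}$, extend over $\bar Y_{W(s)}$ because neither side has poles, then take $G$-invariants. Your argument for 3) also matches the paper: constancy of numerical Chern classes and openness of semistability over $S$.

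\textbf{The gap is in 2).} To close up the Frobenius orbit you must apply 1) to every $\mathbb L^{(k)}=\mathbb L^{\sigma_{\mathfrak p}^{-k}}$ at the \emph{same} point $s$. Each application needs the hypothesis of Theorem~\ref{thm:arith_frob} for $\mathcal E^{(k)}$, namely vanishing $p$-curvature of $\mathcal E^{(k)}|_{X_s}$. The corollary assumes this only for $\mathcal E$, and the ``Galois invariance'' you invoke is not available. Here $\sigma$ acts on the Betti coefficients, and $\mathcal E^{\sigma}$ is tied to $\mathcal E$ only through the transcendental Riemann--Hilbert correspondence. Vanishing $p$-curvature of all conjugates follows from the finiteness being proved; it cannot serve as an input.

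Without it you only have the rational statement of Remark~\ref{rmk:int}, so at best $F^{m*}\mathcal N_{K(s)}\cong \mathcal N_{K(s)}$ as isocrystals. That says nothing about the lattice $N_s$, so Lange--Stuhler cannot be applied.

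Your stability step is also unjustified. Shrinking $S$ can only impose conditions that already hold generically, such as semistability of $(N_s,\nabla)$ \emph{as a flat bundle}. Stability of the underlying vector bundle in characteristic $0$ is not known in advance; it is what one eventually deduces.

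\textbf{The missing idea:} flat-bundle semistability of $(N_s,\nabla)$ is exactly the input needed, and a single Frobenius step suffices. This semistability can be imposed because $(\mathcal N,\nabla)$ comes from a local system on the projective $\bar Y$. Reducing 1) modulo $p$ gives $(N_s,\nabla)\cong (F^*N'_s,\nabla_{\rm can})$, so $N'_s$ is the Cartier descent of $(N_s,\nabla)$.
\begin{itemize}
\item If $W\subset N'_s$ had positive slope, then $F^*W\subset N_s$ would be a $\nabla$-stable subsheaf of slope $p\,\mu(W)>0$. This contradicts semistability of $(N_s,\nabla)$.
\item The relation $c_i(N_s)=p^i c_i(N'_s)$, together with the vanishing of the numerical Chern classes of $N_s$, gives the same vanishing for $N'_s$.
\end{itemize}
This uses only the $p$-curvature hypothesis on $\mathcal E$ at $s$. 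It yields semistability with vanishing numerical Chern classes, which is what the paper's proof actually establishes and what the unitarity argument in Section~\ref{sec:proof} uses. Other Galois conjugates are reached by varying $p$, not by iterating at a fixed $s$.
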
 
  \begin{proof}
 By Theorem~\ref{thm:arith_frob}, $ F^* \mathcal N_{W(s)}'|_{ Y_{W(s)}}
 =\mathcal N_{W(s)}|_{ Y_{W(s)}}$. Consequently the Deligne extensions  are equal. 
  The connections on either side having no poles, this proves the first equality. As for the second one, it follows from the compatibility of the Deligne extension with push-forward. This proves 1). 
  
  The first equality also implies that the connection on $\mathcal N'_{s}$ is a direct  sum of stable  canonical connections with vanishing  numerical Chern classes, and its Cartier descent is precisely the vector bundle $N'_s$.  So  the Cartier descent $N'_s$ is semi-stable as a vector bundle with vanishing numerical Chern classes  as a vector bundle.  This proves 2). 
  
The vector bundle $N'_S$ on $\bar Y_S$ is semi-stable and  has vanishing numerical Chern classes on $\bar Y_s$  for all closed points $s\in S$, so it has the same property on $\bar Y_S$.  
This proves 3). 
  \end{proof}
 \section{Proof of the main  Theorem} \label{sec:proof}
\begin{proof}[Proof of Theorem~\ref{thm:main}] Let $\mathbb L$ be the given privileged local system with respect to $[R_i]$ respectively $([R_i], \delta)$, where $[R_i]$ and $\delta$ are of finite order,  such that the associated connection has vanishing $p$-curvature almost everywhere. We consider a model as in Section~\ref{sec:galois}. The orbit of $\mathbb L$ under ${\mathsf{ Gal}}(K/\mathbb Q)$ is finite. 
 We  choose $p$ as in Section~\ref{sec:galois}.
 By  
Theorem~\ref{thm:tpriv}
   all $\mathbb L^\sigma, \ \mathbb M^\sigma$ are polarized complex variations of Hodge structures.  
 By 
 Simspon's correspondence \cite[Section~4]{Sim92} the underlying connections are polystable  and the underlying vector bundles are polystable if and only if the Hodge fltration is trivial, that is  the local systems $\mathbb M^\sigma$ are unitary. 
  It follows from Corollary \ref{cor:frob:N}  that  all the $\mathbb L^\sigma$ are unitray local systems. 
On the other hand, by  Theorem~\ref{thm:int}, 
 $\oplus_{\sigma\in {\mathsf{ Gal}}(K/\mathbb Q)} \mathbb L^{\sigma}$  is strongly integral, i.e., defined over $\Zb$. We conclude using the well-known Lemma \ref{lemma:finite} below that
$\oplus_{\sigma\in {\mathsf{Gal}}(K/\mathbb Q)} \mathbb L^{\sigma}$ is finite. \end{proof}

\begin{lem}[Topological criterion for finiteness]\label{lemma:finite}
Let $\rho\colon \Gamma \to {\mathsf{GL}}_{r}(\Zb)$ be a representation of a  group $\Gamma$. Then, $\rho(\Gamma)$ is finite if and only if $\rho$ is conjugate to a unitary representation $\Gamma \to
 {\mathsf{ U}}(r)\subset {\mathsf{GL}}_r(\mathbb C)$.
\end{lem}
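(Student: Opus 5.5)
The plan is to prove both directions directly. The direction ``finite implies unitary'' is the classical averaging trick. If $\rho(\Gamma)$ is finite, we start from the standard Hermitian form $h_0$ on $\Cb^r$ and set
$$h(v,w)=\frac{1}{|\rho(\Gamma)|}\sum_{g\in \rho(\Gamma)} h_0(gv,gw).$$
This $h$ is positive definite and $\rho(\Gamma)$-invariant. Choosing an $h$-orthonormal basis conjugates $\rho$ into ${\mathsf U}(r)$. Integrality plays no role in this direction.

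For the converse we assume there is $g\in {\mathsf{GL}}_r(\Cb)$ with $g\rho(\Gamma)g^{-1}\subset {\mathsf U}(r)$. The idea is that $\rho(\Gamma)$ is then simultaneously discrete and relatively compact in ${\mathsf{GL}}_r(\Cb)$, hence finite.
\begin{itemize}
\item[1)] \emph{Discreteness.} $\rho(\Gamma)\subset {\mathsf{GL}}_r(\Zb)$, and ${\mathsf{GL}}_r(\Zb)$ is a discrete subset of $M_r(\Cb)$, since distinct integer matrices differ in some entry by at least $1$.
\item[2)] \emph{Boundedness.} $\rho(\Gamma)\subset g^{-1}{\mathsf U}(r)g$. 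Since ${\mathsf U}(r)$ is compact and conjugation by $g$ is a homeomorphism of ${\mathsf{GL}}_r(\Cb)$, the set $g^{-1}{\mathsf U}(r)g$ is compact, so every entry of every $\rho(\gamma)$ is bounded by a constant $C$ depending only on $g$.
\item[3)] \emph{Conclusion.} An integer matrix with entries bounded by $C$ lies in a finite set of at most $(2C+1)^{r^2}$ elements, so $\rho(\Gamma)$ is finite.
\end{itemize}

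Equivalently, one can phrase step 3 as follows: a discrete subset of a compact set is finite. This applies because $g^{-1}{\mathsf U}(r)g\cap {\mathsf{GL}}_r(\Zb)$ is closed and discrete in a compact space.

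The only point requiring a little care, and the closest thing to an obstacle, is this boundedness step. One must make sure that ``conjugate to unitary'' is meant via a complex matrix $g$, so the bound on entries is uniform in $\gamma$. Fixing $g$ once gives $\|\rho(\gamma)\|\le \|g^{-1}\|\,\|g\|$ in operator norm, which settles it. In the application to Theorem~\ref{thm:main}, the representation $\oplus_\sigma \mathbb L^\sigma$ is defined over $\Zb$ (via an $\mathcal O_K$-lattice and restriction of scalars) and unitary as a direct sum of unitary representations, so the lemma applies.
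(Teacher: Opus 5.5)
Your proof is correct and is essentially the paper's argument: $\rho(\Gamma)$ is discrete because it sits in ${\mathsf{GL}}_r(\Zb)$, so it is finite exactly when it is bounded, and conjugacy into the compact group ${\mathsf U}(r)$ gives boundedness. For the converse you average a Hermitian form, which is the finite-group case of the fact the paper quotes, namely that any bounded subgroup of ${\mathsf{GL}}_r(\Cb)$ is conjugate into ${\mathsf U}(r)$.
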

\begin{proof}
The assumption implies that the subgroup   $\rho(\Gamma)  \subset {\mathsf{GL}}_r(\Zb) \subset {\mathsf{GL}}_r(\mathbb C)$ is a discrete subset. Therefore, it is finite if and only if it is bounded.  Any bounded subgroup of ${\mathsf{GL}}_r(\mathbb C)$ is conjugate to a subgroup of  ${\mathsf{ U}}(r)$.
\end{proof}

\section{A $p$-adic alternative to conclude the finiteness of monodromy}

In this section, we briefly remark on a $p$-adic   alternative  to the topological  boundedness argument in Lemma \ref{lemma:finite}.  Although it is
 more involved than the brief topological argument, it fits in the spirit of this note. 
 The main observation
  states in a precise manner that for the overconvergent  $F$-isocrystals obtained by mod $p$ reduction of privileged flat connections, the formation of $\sigma$-companions matches with the process of $\sigma$-Galois conjugation of the Betti local systems, whenever $\sigma$ is an element of the Galois group of the field of definition of the Betti local system.

We take a  model $X_S$ etc. as in Section~\ref{sec:galois},  which is good now  for the finitely many  Deligne extensions $\mathcal E^\sigma$, the  $\mathcal N^\sigma$ on $\bar X$ and $\bar Y$,  all the summands of the latter ones and  has the property with  good sections for all $\mathcal E^\sigma$. 
 In the sequel,  $\mathcal E^\sigma_{W(s)}$  {\it is viewed as a crystal on } $X_s$, and $\mathcal E^\sigma_{K(s)}$ as the {\it  induced  overconvergent
$F$- isocrystal} on $X_s$.

\begin{prop}  \label{prop:comp}
The assumptions are as in Theorem~\ref{thm:main}. 
Let $\sigma \in \mathsf{Gal}(K/\mathbb{Q})$. Then, $\mathcal{E}_{K(s)}^{\sigma}$ is the $\tilde{\sigma}$-companion to $\mathcal{E}_{K(s)}$, where $\tilde{\sigma} \in \mathsf{Aut}(\bar{\mathbb{Q}}_p)$ is an arbitrary extension of $\sigma$ to an automorphism of $\bar{\mathbb{Q}}_p$.
\end{prop}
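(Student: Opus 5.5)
Under the assumptions of Theorem~\ref{thm:main} the connection has vanishing $p$-curvature almost everywhere. By the proof of Theorem~\ref{thm:main}, every $\mathbb L^\sigma$ is unitary and $\bigoplus_\sigma \mathbb L^\sigma$ has finite monodromy. The plan is therefore to reduce the companion statement to a comparison of Frobenius traces at closed points. We use that companions of irreducible overconvergent $F$-isocrystals are characterized, via Chebotarev/\v{C}ebotarev-type density (Abe, Kedlaya), by the equality of characteristic polynomials of Frobenius at all closed points after applying $\tilde\sigma$.

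First we make the Frobenius structure explicit. By Theorem~\ref{thm:arith_frob}, applied successively to $\mathbb L^{\sigma_\frak{p}^{-k}}$, we have $F^*\mathcal E^{\sigma_\frak p^{-1}}_{W(s)}\cong \mathcal E_{W(s)}$ on $X_{W(s)}$. Each of these is again privileged by the Corollary following Lemma~\ref{lem:arith=geom}, and the model is chosen good for all $\mathcal E^\sigma$. Iterating $f$ times, where $\sigma_\frak p^f=1$ in $\mathsf{Gal}(K/\mathbb Q)$, gives $(F^f)^*\mathcal E_{W(s)}\cong \mathcal E_{W(s)}$. This is the $F$-isocrystal structure on $\mathcal E_{K(s)}$ relative to $\F_{p^f}$, and likewise for every $\mathcal E^\sigma$.

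Next we compute the Frobenius traces. Since $\mathbb M$, and hence $\mathbb L$, has finite monodromy, the crystal $\mathcal E_{W(s)}$ restricted to $X_{W(s)}$ is the one attached to a finite étale cover $Z\to X$ trivializing $\mathbb L$. By Katz's theorem \cite{Kat82} and its converse, $\mathcal E$ then corresponds to a representation of a finite quotient $H$ of $\pi_1$ that is unramified at good $p$ after specialization. The $F$-isocrystal $\mathcal E_{K(s)}$ is then the unit-root isocrystal associated to the $p$-adic representation $\pi_1^{\acute et}(X_s)\to H\to \mathsf{GL}_r(K_\frak p)$. We then check that the Frobenius structure from Theorem~\ref{thm:arith_frob} agrees with this one up to a scalar of finite order; uniqueness of the Frobenius structure on an irreducible isocrystal up to scalar handles this, and the scalar can be normalized by comparing determinants with $\delta$. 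Frobenius traces at a closed point $x$ are then the traces $\mathrm{Tr}(\rho(\mathrm{Frob}_x))$, which are sums of roots of unity in $K$. For $\mathcal E^\sigma$ the corresponding representation is $\rho^\sigma$, so its traces are $\sigma(\mathrm{Tr}\rho(\mathrm{Frob}_x))=\tilde\sigma(\mathrm{Tr}\rho(\mathrm{Frob}_x))$. This is exactly the defining property of the $\tilde\sigma$-companion, and it is independent of the chosen extension $\tilde\sigma$, since all values lie in $K$.

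The main obstacle is the identification of the Frobenius structure produced by Theorem~\ref{thm:arith_frob} with the one coming from the finite étale trivialization. Concretely, the isomorphism $F^*\mathcal E'\cong\mathcal E$ is unique only up to a scalar, and the composite of $f$ such isomorphisms may differ from the canonical unit-root Frobenius by a $p$-adic unit. I would first try to show the scalar is a root of unity by comparing determinants, where everything is explicit in terms of $\delta$ and Lemma~\ref{lem:arith=geom}. If needed, I would then twist by a constant rank one $F$-isocrystal, which does not affect the companion relation up to the corresponding $\tilde\sigma$-twist.
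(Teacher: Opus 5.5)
\textbf{There is a genuine gap.} Your argument depends on the finiteness of the monodromy of $\bigoplus_\sigma\mathbb L^\sigma$, which you take from the proof of Theorem~\ref{thm:main}. That is exactly what this proposition must not use.

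Read narrowly, with vanishing $p$-curvature as a hypothesis and Theorem~\ref{thm:main} already proven, your deduction is essentially fine. A finite étale trivialization makes everything unit-root, the canonical unit-root Frobenius structures remove the scalar ambiguity, and the traces at closed points are $\sigma$-conjugate sums of roots of unity. (Invoking Theorem~\ref{thm:arith_frob} for the conjugates $\mathbb L^{\sigma_{\mathfrak p}^{-k}}$ also needs their $p$-curvature to vanish at $s$, which again comes only from finiteness.)

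The problem is the role the proposition plays. It is the first step of the $p$-adic replacement for Lemma~\ref{lemma:finite}: companions are the Galois conjugates, all of them are bounded by Corollary~\ref{cor:frob:N}, and Koshikawa's criterion then \emph{produces} finiteness. Feeding in finiteness, which was obtained via Lemma~\ref{lemma:finite}, makes this route circular and Koshikawa's theorem superfluous.

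Moreover, the introduction presents Proposition~\ref{prop:comp} as giving $p$-to-$p$ companions for privileged local systems with finite $R_i$ in general. There the monodromy is typically infinite, e.g.\ rank two rigid local systems on $\mathbb{P}^1\setminus\{0,1,\infty\}$ whose projectivized monodromy is the hyperbolic $(2,3,7)$ triangle group. In that case the Frobenius structure comes from Remark~\ref{rmk:int}, $\mathcal E_{K(s)}$ need not be unit-root, and Frobenius traces have no a priori relation to $\mathrm{Tr}\,\rho$. Your trace computation then has nothing to work with.

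The missing idea is to identify the companion using privilegedness alone, as the paper does:
\begin{enumerate}
\item The $\tilde\sigma$-companion of $\mathcal E_{K(s)}$ exists by the counting argument of \cite[Section~7]{EG20}.
\item By \cite[Theorem~9.8]{Del73}, its local monodromy along the $D_i$ is the $\sigma$-twist of that of $\mathcal E$. So the associated connection lies in $\mathcal M_{dR}(X,r,\Gamma_i^\sigma)$, resp.\ $\mathcal M_{dR}(X,r,\Gamma_i^\sigma,\mathcal L^\sigma)$.
\item Forming companions preserves the dimension of the intermediate-extension cohomology $H^1(\bar X, j_{!*}\mathcal{E}nd)$ (resp.\ its trace-free part) by \cite[Lemma~3.4]{EG18}.
\item $\mathbb L^\sigma$ is privileged with respect to $[\sigma(R_i)]$ by Proposition~\ref{prop:sigma}, so the companion must be the privileged point, namely $\mathcal E^\sigma$.
\end{enumerate}
No unitarity or finiteness enters. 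This is what lets the $p$-adic argument serve as an independent route to finiteness of monodromy.
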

\begin{proof}

Using the same counting argument as in \cite[Section 7]{EG20}, one sees that the $\tilde{\sigma}$-companion of $\mathcal{E}$ exists. It follows from \cite[Theorem 9.8]{Del73} that the corresponding formal connection lies in $\mathcal M_{dR}(X,r, \Gamma_i^\sigma)$ resp.
$\mathcal M_{dR}(X,r, \Gamma_i^\sigma,\mathcal L^\sigma)$. It must be the privileged point, in case b) one uses that forming companions respects the intermediate cohomology, see \cite[Lemma~3.4]{EG18}.
\end{proof}

In the following we denote by $Z/\mathbb{F}_q$ a smooth scheme over a finite field. An overconvergent $F$-isocrystal  $\mathcal F$ on $Z$ is called \emph{bounded} (in the literature, this is referred to as the \emph{unit-root} property), if for every closed $z \in |Z|$ its Frobenius eigenvalues at $z$ are $p$-{\it adic units}.

\begin{prop}[Koshikawa]
Let $\mathcal{F}$ be an irreducible overconvergent $F$-isocrystal  with order determinant on $Z$ with the property that for every $\sigma \in \mathsf{Aut}(\bar{\mathbb{Q}}_p)$, the $\sigma$-companion $\mathcal{F}^{\sigma}$ of $\mathcal{F}$ is bounded. Then, $\mathcal{F}$ is isotrivial, i.e., there exists a finite \'etale cover $f\colon Z' \to Z$ such that $f^*\mathcal{F}$ is the trivial $F$-isocrystal.
\end{prop}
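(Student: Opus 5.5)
Koshikawa's proposition can be reduced to the $\ell$-adic situation, where all Frobenius eigenvalues of all companions are units, and finiteness then follows from an integrality and boundedness argument on the monodromy. The overall shape mirrors Lemma~\ref{lemma:finite}: boundedness at every place together with integrality forces finiteness.

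\emph{Step one.} Since $\mathcal F$ is irreducible with finite order determinant, Abe's theory of companions (the Langlands correspondence for isocrystals) and Deligne's conjectures apply. They show that $\mathcal F$ is pure of weight $0$ and that all Frobenius characteristic polynomials at closed points $z$ have coefficients in a number field $E$. Choosing $\ell\neq p$, we also obtain an $\ell$-adic companion $\mathbb V_\ell$, an irreducible lisse sheaf on $Z$ with finite order determinant.

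\emph{Step two.} Fix a closed point $z$ and an eigenvalue $\alpha$ of ${\rm Frob}_z$. The hypothesis says $\tilde\sigma(\alpha)$ is a $p$-adic unit for every $\sigma$, so $\alpha$ is a unit at every place above $p$. By Deligne and Lafforgue, $\alpha$ is integral at all finite places away from $p$. Its inverse $\alpha^{-1}$ is also an eigenvalue of the dual, whose companions are likewise bounded, so $\alpha^{-1}$ is integral as well. Purity of weight $0$ gives $|\alpha|=1$ at every archimedean place. By Kronecker's theorem, $\alpha$ is a root of unity.

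\emph{Step three.} This step should be the main obstacle: deducing finite monodromy of $\mathbb V_\ell$ from the fact that all Frobenius eigenvalues are roots of unity. There are two routes to try.

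\begin{itemize}
\item \emph{Arithmetic route.} First, bound the order of these roots of unity uniformly, using that their degree over $\Q$ is bounded by $r\cdot[E:\Q]$. Then, by Chebotarev density, ${\rm Frob}_z^N$ is unipotent on $\mathbb V_\ell$ for a fixed $N$ and a dense set of elements. So the Zariski closure of the monodromy has a unipotent identity component. Since $\mathbb V_\ell$ is irreducible, hence semisimple, that identity component is trivial, and the geometric monodromy is finite.
\item \emph{Crystalline route.} Alternatively, use the unit-root property directly: bounded $\mathcal F$ is, by Crew/Katz, a unit-root isocrystal, i.e.\ a $p$-adic representation of $\pi_1(Z)$. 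Combine this with the root-of-unity Frobenius eigenvalues via the same Chebotarev argument.
\end{itemize}

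\emph{Step four.} Finite monodromy of $\mathbb V_\ell$ means that after a finite étale cover $f\colon Z'\to Z$, the pullback $f^*\mathbb V_\ell$ is geometrically trivial, and after a further finite extension of $\mathbb F_q$ it is trivial. Companions are compatible with pullback and uniquely determined by Frobenius traces. The trivial $F$-isocrystal has the same traces as the trivial $\ell$-adic sheaf, so $f^*\mathcal F$ is trivial and $\mathcal F$ is isotrivial.
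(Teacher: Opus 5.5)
Your proof is correct and follows the route the paper sketches when it defers to Koshikawa. The hypothesis makes every Frobenius eigenvalue a unit at all places above $p$, Lafforgue--Deligne give units away from $p$ and weight $0$, and Kronecker then gives roots of unity. Your uniform bound on their orders (via finiteness of the field of traces), Chebotarev, and the reductive Zariski closure whose unipotent identity component must be trivial are a sound way to carry out the paper's final ``isotriviality is then inferred'' step. The one point to state explicitly is that $f^*\mathcal F$ is semisimple, being a finite \'etale pullback of an irreducible object; this is what lets its Frobenius characteristic polynomials force $f^*\mathcal F$ to be trivial in Step four.
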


We refer the reader to \cite[Theorem 1.2]{Kos17} for a proof  and only briefly comment on the idea. The assumption of all $\sigma$-companions being bounded is equivalent to the Frobenius eigenvalues $\lambda$ at every closed point $z$ satisfying $|\sigma(\lambda)|=1,$ for all field automorphisms $\sigma$ of $\bar{\mathbb{Q}}_p$,  where $| - |$ denotes the $p$-adic norm.  As explained in \cite{Kos17}, this happens if and only if the Frobenius eigenvalues are roots of unity at all closed points. The isotriviality is then inferred from this property.

By Corollary \ref{cor:frob:N}, all companions of $\mathcal{E}_{K(s)}$ give rise to an underlying stable vector bundle, and thus are bounded, in the above sense. We may therefore apply Koshikawa's finiteness criterion.

\section{Application} \label{sec:appli}

Let $\bar f \colon \bar X\to S$ be a   smooth  smooth projective morphism  over a connected smooth quasi-projective variety $S$, such that $\bar f_*\sO_{\bar X}=\sO_S$, together with a relative normal crossings divisor $\cup_{i\in I} D_i=D \to S$. Define
$X=\bar X\setminus D, \ f=\bar f|_X \colon X\to S$. For $T\to S$ a morphism, we denote by $X_T=X\times_S T$ the pullback, and similarly $\bar X_T, \bar f_R, \bar f_T $ etc.   Let $s\in S$ be a closed point.  We fix a rank $r$ together with quasi-unipotent  conjugacy classes $[R]_i\subset {\mathsf{GL}}_r(\mathbb C), \ i\in I$ and a torsion character $\delta$.
 We assume that $S$ is a $K\pi_1$ or else that $X_s$ is an hyperbolic curve or else  that $f$ has a section. 
 \begin{thm} \label{thm:gk}

 Let $\mathbb L_s$ be an irreducible privileged local system  with respect to $[R_i]$, respectively $([R_i], \delta)$  on $X_s$  of rank $r$,  defined over the (then cyclotomic)  field $K$. Then there is a finite \'etale base change $T\to S$  such that 
 \begin{itemize}
 \item[1)] $\mathbb L_s$ extends to a local system $\mathbb L_T$ on $X_T$ defined over $K$;
 \item[2)] if $\mathbb L_T$ and $\mathbb L_{T'}$ are two such extensions, then there are $T''\to T', \ T''\to T$ finite \'etale such that the pullbacks of  $\mathbb L_T$ and $\mathbb L_{T'}$ to $T''$ are isomorphic.
 \item[3)] if $\mathbb L_s$  has $p$-curvature equal to $0$ for almost all $p$, so does $\mathbb L_T$ and both verify the $p$-curvature conjecture. 
 \end{itemize}
 \end{thm}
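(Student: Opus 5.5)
The plan is to use the homotopy exact sequence of the family $f\colon X\to S$ to turn the privileged property of $\mathbb L_s$ into invariance under the monodromy action of $\pi_1(S,s)$. Under each of the three hypotheses ($S$ a $K\pi_1$, $X_s$ a hyperbolic curve, or $f$ admitting a section), the sequence
\[ \pi_1(X_s,x)\to \pi_1(X,x)\to \pi_1(S,s)\to 1 \]
is exact on the left as well, or at least split. In every case $\pi_1(S,s)$ acts on $\pi_1(X_s,x)$ by outer automorphisms. This action permutes the conjugacy classes of the loops $\gamma_i$ around the $D_i\cap X_s$, since $D\to S$ is a relative normal crossings divisor. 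After a finite étale base change we may assume each component of $D_s$ is fixed, so the $\gamma_i$ are preserved up to conjugation.

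\textbf{Step 1: finiteness of the orbit.} For $g\in\pi_1(S,s)$, the twist $g^*\mathbb L_s$ is again irreducible, has the same local monodromies $[R_i]$ and the same determinant $\delta$ (after a further finite étale cover killing the finite action on torsion characters). The automorphism $g$ acts on $M_B(X_s,r,[R_i])$ algebraically, and it identifies the tangent spaces $H^1(\bar X_s, j_{!*}\mathcal End(\cdot))$ at $\mathbb L_s$ and at $g^*\mathbb L_s$. Hence the tangent dimensions agree, and the privileged property forces $g^*\mathbb L_s\cong \mathbb L_s$. So the orbit has a single point, and no base change is even needed beyond fixing the boundary components.

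\textbf{Step 2: extension.} Knowing $g^*\mathbb L_s\cong\mathbb L_s$ for all $g$, we extend $\rho_s$ to $\pi_1(X_T)$ by the standard argument. Irreducibility gives Schur's lemma, so the isomorphisms $g^*\rho_s\simeq\rho_s$ are unique up to scalars. This yields a projective representation of $\pi_1(X)$ extending $\rho_s$. The obstruction to lifting it to ${\mathsf{GL}}_r$ is a class in $H^2(\pi_1(S),K^\times)$, induced from $\mu_r$ via the determinant. The main obstacle is to kill this class by a finite étale cover $T\to S$.
\begin{itemize}
\item With a section, the splitting of the sequence makes the construction explicit.
\item In the $K\pi_1$ and hyperbolic cases, I would first normalize the determinant to $\delta$, so the cocycle takes values in $\mu_r$. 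A finite torsion class in $H^2$ of a residually suitable group dies on a finite-index subgroup; I expect to need the geometric input (good fundamental groups for hyperbolic curve fibrations, or the $K\pi_1$ hypothesis) here.
\end{itemize}
That the extension is defined over $K$ follows as in Corollary~\ref{cor:cyclo}: the isomorphisms can be chosen over $K$ after possibly enlarging the cover. This gives 1). For 2), two extensions differ by a character of $\pi_1(T)$ with values in the scalars, by Schur's lemma. This character takes values in $\mu_r$ once the determinants are normalized, so it trivializes on a finite étale cover $T''$.

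\textbf{Step 3: $p$-curvature.} Choose a spreading out over $\mathbb Z[1/N]$. The $p$-curvature of the Gauss–Manin-type flat connection $\mathcal E_T$ on $X_T$ vanishes along $X_s$. I would show that it vanishes identically by an isomonodromy argument. On $X_{T}$ the connection is integrable, and the relative $p$-curvature along the fibres vanishes on one fibre. Since $\mathbb L_t$ is privileged on every fibre $X_t$ (by the same transport argument), Theorem~\ref{thm:main} applies fibrewise. Alternatively one can invoke Theorem~\ref{thm:arith_frob} in families and use a Katz-type argument to get vanishing in the horizontal directions.

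To conclude, the monodromy of $\mathbb L_s$ is finite by Theorem~\ref{thm:main}, and $\rho_T$ restricted to $\pi_1(X_s)$ has finite image. Its image modulo this finite normal subgroup is governed by the $\pi_1(S)$-action, which is by scalars. Because the determinant is torsion, these scalars are roots of unity of bounded order, so $\rho_T$ has finite image. A local system with finite monodromy automatically satisfies the conclusion of the $p$-curvature conjecture for $\mathbb L_T$, and this yields 3).
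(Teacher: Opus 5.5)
Your Step~1 is the paper's argument. Transport along $\pi_1(S,s)$ preserves the boundary classes, the torsion determinant (after a finite cover) and $\dim H^1(\bar X_s, j_{!*}\mathcal{E}nd(\cdot))$, so the privileged point is fixed, as in Proposition~\ref{prop:sigma}.

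The gap is Step~2, which carries all the content of 1). The paper does not build the extension by hand. It imports it, together with essential uniqueness and preservation of finite monodromy, from the non-abelian fixed part theorem \cite[Theorem~1.4]{EK24}.

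Via Schur's lemma you correctly reduce to a central extension of $\pi_1(S)$ (by $K^\times$, or by $\mu_r$ after normalising) that must split over a finite-index subgroup. You never show that it does, and both mechanisms you suggest fail:
\begin{itemize}
\item The section case is not easier. With $\pi_1(X)=\pi_1(X_s)\rtimes\pi_1(S)$, the intertwiners $q\mapsto A_q$ still only form a projective representation of $\pi_1(S)$ with the same cocycle.
\item The principle that a torsion class in $H^2(\pi_1(S),\mu_r)$ dies on a finite-index subgroup is Serre's goodness of $\pi_1(S)$ in degree~$2$. This is not known for arbitrary smooth quasi-projective $S$. The three alternative hypotheses are exactly the standard conditions making $\pi_2(S)\to\pi_1(X_s)$ vanish, i.e.\ making $1\to\pi_1(X_s)\to\pi_1(X)\to\pi_1(S)\to 1$ exact. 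At least in the hyperbolic-curve and section cases they give no goodness.
\end{itemize}
Likewise, ``defined over $K$ after enlarging the cover'' is unsupported. Normalising $\det A_g$ to a root of unity needs $r$-th roots in the coefficients, and a cover of $S$ does not change the coefficient field.

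In Step~3, drop the isomonodromy/fibrewise part. The hypothesis only gives vanishing $p$-curvature of $\mathbb L_s$ on $X_s$, so Theorem~\ref{thm:main} cannot a priori be applied to other fibres.

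Your closing argument is correct, and it is an elementary substitute for \cite[Theorem~1.4 c)]{EK24}:
\begin{itemize}
\item $\rho_s(\pi_1(X_s))$ is finite by Theorem~\ref{thm:main};
\item conjugation by $\rho_T(\pi_1(X_T))$ acts on this finite group through a finite automorphism group;
\item on the finite-index kernel, $\rho_T$ is scalar by Schur;
\item a torsion determinant bounds those scalars.
\end{itemize}

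The torsion-determinant normalisation is essential, not cosmetic. Suppose $b_1(T)>0$ and $\chi\colon \pi_1(T)\to K^\times$ has infinite order. Then $\mathbb L_T\otimes f_T^*\chi$ is another extension of $\mathbb L_s$ defined over $K$. Comparing determinants, it is not isomorphic to $\mathbb L_T$ on any finite \'etale $T''$, and it cannot have finite monodromy if $\mathbb L_T$ does. So your arguments for 2) and 3) hold only for normalised extensions, and producing a normalised extension is again part of the missing Step~2.
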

 \begin{proof} 
 We apply \cite[Theorem~1.4]{EK24}.  
 We argue as in Proposition~\ref{prop:sigma}
  with ${\mathsf{ Aut}}(\mathbb C)$ replaced by  
  $\pi_1(S,s)$ to conclude that its  action $\mathbb L_s$ is fixed by this ation. 
  This proves 1).  The essential uniqueness in i) of \cite[Theorem~1.4]{EK24} proves 2). 
Finally for 3), the $p$-curvature vanishing implies by Theorem~\ref{thm:main} that the monodromy of $\mathbb L_s$ is finite. Then we  apply \cite[Theorem~1.4 c)]{EK24} to conclude that the one of $\mathbb L_T$ is finite as well.  In particular $\mathbb L_T$ has vanishing $p$-curvature almost everywhere. 
 \end{proof}

 \subsection{A concrete example} \label{ss:Pat} 
One example is concretely written in the literature, see \cite[Theorem~24]{Pat14}. We reproduce here  Patrikis' example on the  Betti side  (rather than on the  $\ell$-adic one).
 Then  $ S={\rm Spec}( B)$ with 
 \ga{}{ B=\C[T_1,\ldots, T_n][1/\prod_{i<j} (T_i-T_j)],  X= {\rm Spec}(A), \ A=B[X][1/\prod_i (X-T_i)], \notag}
 and $\mathbb L_s$ is one of Katz's rigid (so privileged) local systems on $X_s$. So Theorem~\ref{thm:gk}  3) applies. 
 
 \section{Variant}
Let  $(X,D)$ be as  in Theorem~\ref{thm:main} and  $\mathbb V$ be a semi-simple complex  local system with the property that the corresponding connection $(V,\nabla)$ admits a  model  $(X_S,  \mathcal V=(V,\nabla)_S)$ 
 as in Section~\ref{ss:good_mod}. We freely use the notation of Section~\ref{ss:good_mod}. Recall that for ${\rm Spec}(W(s))\to S$ given, $\mathcal V_{|_{X_{W(s)}}}$ is defined to be an $F$-crystal if 
 $F^* \mathcal V_{|_{X_{W(s)}}}=\mathcal V_{|_{X_{W(s)}}}$. 
 
 \begin{prop} \label{prop:V}
 If the connections $\mathcal V|_{X_{W(s)}}$ on $X_{W(s)}$ defined for a $W(s)$-lift satisfy $F^*\mathcal V|_{X_{W(s)}} = \mathcal V|_{X_{W(s)}}$
 over a dense set of closed points $s\in S$, and $\mathbb V$ is defined over $\mathbb Z$, then $(V,\nabla)$ verifies the $p$-curvature conjecture. 
 \end{prop}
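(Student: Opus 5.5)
We prove Proposition~\ref{prop:V} by reproducing the second half of the proof of Theorem~\ref{thm:main}: the $F$-crystal hypothesis plays the role that Theorem~\ref{thm:arith_frob} played there. Assume the $p$-curvature of $(V,\nabla)$ vanishes for almost all $p$. By Proposition~\ref{prop:finite_inf}, applied to each irreducible summand of the semi-simple $\mathbb V$, the local monodromies $[R_i]$ then have finite order. We may therefore pass to the Kawamata cover $\bar q\colon \bar Y\to\bar X$ of Section~\ref{ss:good}. There $q^*\mathbb V=h^*\mathbb W$ for a semi-simple local system $\mathbb W$ on $\bar Y$. Since $\mathbb V$ is defined over $\Z$, so is $\mathbb W$. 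Let $\mathcal N=(N,\nabla)$ be the associated flat bundle on $\bar Y$. By compatibility of the Deligne extension with push-forward, $(\bar q_*\mathcal N)^G$ is the Deligne extension of $\mathcal V$. Hence the identity $F^*\mathcal V|_{X_{W(s)}}=\mathcal V|_{X_{W(s)}}$ lifts, exactly as in Corollary~\ref{cor:frob:N}~1), to $F^*\mathcal N_{W(s)}=\mathcal N_{W(s)}$ on $\bar Y_{W(s)}$. The point is that both sides are connections without poles on $\bar Y$ whose restrictions to $Y_{W(s)}$ agree, so their Deligne extensions coincide.

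Next we argue as in Corollary~\ref{cor:frob:N}~2) and 3). The vanishing $p$-curvature shows that $\mathcal N_s$ is the Frobenius pullback of its Cartier descent. The $F$-crystal identity identifies that Cartier descent with $N_s$ itself. Since $N_s$ is a flat bundle, it has vanishing numerical Chern classes. The relation $N_s\cong F^*N_s$ then forces $N_s$ to be semi-stable: a destabilizing subsheaf would pull back under the iterated Frobenius to subsheaves of $N_s$ of slope growing like $p^m$, which is impossible for a bundle with a connection and bounded slopes. This holds for a dense set of closed points $s$. Openness of semi-stability in families then shows that the generic fibre $N$ on $\bar Y$ is semi-stable with vanishing numerical Chern classes.

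We then invoke Simpson's correspondence \cite[Section~4]{Sim92}. A semi-simple flat bundle on the projective $\bar Y$ whose underlying vector bundle is semi-stable with vanishing Chern classes corresponds to a Higgs bundle with zero Higgs field. Therefore $\mathbb W$, and with it $\mathbb V=q_*(\mathbb W)^G$ restricted to a dense open, is unitary. Concretely, each irreducible summand of $\mathbb W$ has polystable underlying bundle, hence corresponds to a unitary representation by Narasimhan--Seshadri--Donaldson--Uhlenbeck--Yau. Since $\mathbb V$ is defined over $\Z$, Lemma~\ref{lemma:finite} gives finite monodromy for $\mathbb V$, which is what the $p$-curvature conjecture asserts.

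The step we expect to be the main obstacle is the passage from the Frobenius-fixed property on the mod $p$ reductions to semi-stability on $\bar Y$ itself. Vanishing $p$-curvature is needed to form the Cartier descent integrally. A dense set of $s$ must then suffice to pass to the generic fibre, and we plan to use constructibility of the Harder--Narasimhan type on $S$ for this. One must also check that semi-stability of $N$, rather than the polystability used in Corollary~\ref{cor:frob:N}, is enough to conclude. We handle this by working summand by summand: $\mathbb W$ is semi-simple, so $\mathcal N$ is a sum of stable flat bundles, and for a stable flat bundle Simpson's theorem identifies semi-stability of the underlying bundle with unitarity.
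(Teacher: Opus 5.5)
Your passage to $\bar Y$, the transfer of the $F$-crystal identity to $F^*\mathcal N_{W(s)}\cong\mathcal N_{W(s)}$, and the Frobenius-iteration argument giving semistability of $N_s$ and then of $N$ follow the paper's sketch and are fine. One small correction: in characteristic $p$ a bundle with connection need not have vanishing numerical Chern classes, since $F^*L$ carries the canonical connection and has degree $p\deg L$. Here the vanishing does hold, either because $N_s$ is the reduction of the characteristic-zero flat bundle $N$, or directly from $c_i(N_s)=p^ic_i(N_s)$.

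The genuine gap is the final Simpson step. It is false that a semisimple flat bundle on a smooth projective variety whose underlying bundle is semistable, or even stable, of degree zero has vanishing Higgs field or is unitary. For a counterexample, take a curve $C$ of genus $g\ge 2$ and a stable bundle $E$ of rank $r$ and degree $0$. Its holomorphic connections form an affine space over $H^0(C,\mathcal{E}nd(E)\otimes\Omega^1_C)$, which has dimension $r^2(g-1)+1$. All of these connections are flat and irreducible, since a $\nabla$-stable subbundle would have degree $0$. Yet only the Narasimhan--Seshadri connection is unitary: a flat connection preserving a metric $h$ and having $(0,1)$-part $\bar\partial_E$ is the Chern connection of a flat, hence Hermitian--Einstein, metric, and such a metric is unique on a stable bundle. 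Simpson's correspondence changes the holomorphic structure unless $\theta=0$, so semistability of $N$ by itself says nothing about $\theta$.

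The paper obtains unitarity ``as in the proof of Theorem~\ref{thm:main}'', and there the essential extra input is that the local system underlies a polarized $\mathbb C$-VHS (Theorem~\ref{thm:tpriv}). For an irreducible such local system the Higgs bundle $(\mathrm{Gr}_F E,\theta)$ is stable. Normalize so that $F^0=E\neq F^1$. If $F^1\neq 0$, the sub-Higgs sheaf $E/F^1$, which is killed by $\theta$, would have negative degree, so $\deg F^1>0$, contradicting semistability of $E$. Hence the Hodge filtration is trivial and the local system is unitary. You need such a Hodge-theoretic input on $\mathbb W$, for instance that it underlies a polarized VHS, as for the Gauss--Manin systems to which the proposition is applied. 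As written, nothing in your argument converts the Frobenius-periodicity of the $N_s$ into unitarity in characteristic zero.
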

  \begin{proof}
 As in Section~\ref{sec:galois} we go up to $\bar Y$ to trivialize the monodromies at infinity using  Proposition~\ref{prop:finite_inf}, and the $F$-crystal property translates on $\bar Y$ as an $F$-crystal property on the projective variety.  This implies in particular that the determinants of the summands on $\bar Y$  have finite order. We then argue as in Corollary~\ref{cor:frob:N}
to conclude that for this dense set of $s$,   $E_s$ is a  stable vector bundle with vanishing numerical Chern classes, and as in the Proof of Theorem~\ref{thm:main} in Section~\ref{sec:proof} to conclude that  $(V,\nabla)$ is unitary. Finally we apply Lemma~\ref{lemma:finite} to conclude. 
 \end{proof}

\begin{thm} \label{thm:subquotient}  Let $\mathbb V$ be as in Proposition~\ref{prop:V}. 
Then any irreducible  subquotient $\mathbb L$  of $\mathbb V$ which has monodromies $[R_i]$ along $D_i$ such that no other subquotient has the same monodromies along all $D_i$   satisfies the $p$-curvature conjecture. 
\end{thm}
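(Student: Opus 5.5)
Write $\mathbb V=\bigoplus_k \mathbb L_k$ as a direct sum of irreducibles; $\mathbb V$ is semi-simple, so irreducible subquotients are summands. Let $\mathbb L=\mathbb L_1$ be the given summand, with monodromies $[R_i]$ along $D_i$ that no other summand shares. The plan is to show that the $F$-crystal structure on $\mathcal V$ restricts to one on the summand $\mathcal L_1$ corresponding to $\mathbb L$ (or on its Galois orbit). We then rerun the argument of Proposition~\ref{prop:V} for that summand, with $\mathbb Z$-integrality replaced by integrality of the Galois orbit.

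First, since $\mathbb V$ is defined over $\mathbb Z$, the Galois group ${\mathsf{Aut}}(\mathbb C)$ permutes the summands. The local monodromies of $\mathbb V$ are defined over $\mathbb Z$, so they are quasi-unipotent, and they are finite after Proposition~\ref{prop:finite_inf} applied as in the proof of Proposition~\ref{prop:V}. Hence $\mathbb L$ is defined over a cyclotomic field $K$. The direct sum of its Galois conjugates $\mathbb W=\bigoplus_{\sigma}\mathbb L^\sigma$ is a subsystem of $\mathbb V$ defined over $\mathbb Z$ and integral.

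Next, on a good model I decompose $\mathcal V_{W(s)}=\bigoplus_k \mathcal L_{k,W(s)}$ using Deligne extensions as in Section~\ref{ss:dRst}. By the residue computation in the proof of Theorem~\ref{thm:arith_frob}, $F^*$ multiplies residue eigenvalues by $p$. Hence $F^*\mathcal L_{k}'$ has local monodromy classes $[R_{k,i}^{\,p}]$, where $\mathcal L_k'$ is the component mapped to $\mathcal L_k$. Since $F^*\mathcal V=\mathcal V$, Frobenius pullback permutes the summands. By Lemma~\ref{lem:arith=geom}, this permutation sends the summand with classes $[R_i]$ to the one with classes $[R_i^{1/p}]$, which is $\mathbb L^{\sigma_{\frak p}^{-1}}$. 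This identification uses the hypothesis that no other subquotient shares these classes; Galois conjugates inherit the same uniqueness. Thus $F^*\mathcal L'_{W(s)}=\mathcal L_{W(s)}$, where $\mathcal L'$ corresponds to $\mathbb L^{\sigma_{\frak p}^{-1}}$. This is exactly the conclusion of Theorem~\ref{thm:arith_frob}, now obtained without the privileged hypothesis.

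From here the proof follows Corollary~\ref{cor:frob:N} and Section~\ref{sec:proof}. On $\bar Y$, the bundles underlying the $\mathbb M^\sigma$ are Cartier descents of connections with vanishing numerical Chern classes, so they are semi-stable for a dense set of $s$ and hence on $\bar Y$. Each $\mathbb M^\sigma$, as a summand of the Gauss--Manin-like $\mathbb V$, should underlie a polarized complex VHS. For a genuine Gauss--Manin $\mathbb V$, irreducible summands carry a VHS by Deligne semisimplicity; otherwise I would assume this as part of the setup. Simpson's correspondence then makes all $\mathbb L^\sigma$ unitary. Lemma~\ref{lemma:finite} applied to $\mathbb W$ over $\mathbb Z$ gives finite monodromy.

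The main obstacle is the middle step, matching Frobenius pullback on the crystal side with the Galois permutation of summands. There are two difficulties. One must control the multiplication of residues by $p$ compatibly with the Deligne normalization. One must also ensure that the decomposition of $\mathcal V$ into summands is respected integrally over $W(s)$ and not merely over $K(s)$. I would handle the second by discarding finitely many $s$ so that the idempotents of the decomposition have good reduction.
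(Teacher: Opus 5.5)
Your route is the one the paper intends; its proof is only a sketch. You use that $\mathbb L$ is a summand of the semisimple $\mathbb V$, and that uniqueness of its local monodromies makes it $\mathsf{Aut}(\mathbb C/K_\circ)$-stable, hence defined over a cyclotomic field. You replace the tangent-space count of Theorem~\ref{thm:arith_frob} by the Frobenius permutation of the summands of $\mathcal V$ together with uniqueness. Then you run Corollary~\ref{cor:frob:N}, Section~\ref{sec:proof} and Lemma~\ref{lemma:finite} on the Galois orbit of $\mathbb L$.

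The genuine gap is the step you defer: that every $\mathbb M^\sigma$ underlies a polarized complex VHS. This is not a formality. Corollary~\ref{cor:frob:N} only gives that the underlying bundle is semistable with vanishing Chern classes, and that does not imply unitarity. For example, on an elliptic curve $(\mathcal O, d+a\,dz)$ has stable underlying bundle and non-unitary monodromy for general $a$. It is the Hodge filtration that converts semistability into vanishing of the Higgs field. In Section~\ref{sec:proof} the VHS is supplied by Theorem~\ref{thm:tpriv}, whose $\mathbb C^\times$-argument uses that $\mathbb L$ is privileged in $M_B(X,r,[R_i])$. Nothing in the hypotheses of Proposition~\ref{prop:V} replaces this, and the paper's claim that the proof of Section~\ref{sec:proof} ``remains the same'' also passes over the point. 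So as written you prove the theorem only under an extra hypothesis.

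That hypothesis holds whenever $\mathbb V$ itself underlies a polarized VHS, e.g.\ for Gauss--Manin systems, which is the case of the corollary. Then $t\cdot\mathbb V\cong\mathbb V$ for all $t\in\mathbb C^\times$. So $t$ permutes the finitely many irreducible summands (after passing to $\bar Y$) continuously, hence fixes each. Simpson's criterion as used in Theorem~\ref{thm:tpriv} then gives a VHS on each $\mathbb M^\sigma$. It is this argument, not semisimplicity alone, that puts a VHS on the summands.

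There are two smaller points.

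First, your assertion that local monodromies of a local system defined over $\mathbb Z$ are quasi-unipotent is false. The rank-two system on $\mathbb G_m$ with monodromy $\bigl(\begin{smallmatrix}2&1\\1&1\end{smallmatrix}\bigr)$ is a counterexample. Quasi-unipotence of the $[R_i]$ has to be taken from the standing hypotheses as in Theorem~\ref{thm:main}, which is also what Proposition~\ref{prop:finite_inf} presupposes.

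Second, ``Frobenius pullback permutes the summands'' is a Krull--Schmidt statement for $p$-adic crystals on $X_s$. It requires the $\mathcal L_{k,W(s)}$ to remain indecomposable there, and good reduction of the idempotents does not provide this. The paper is equally terse at the analogous step of Theorem~\ref{thm:arith_frob}, but you should state it as an input rather than treat it as automatic.
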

 
 \begin{proof}
 If $\mathbb V$ itself is  irreducible, then $\mathbb L=\mathbb V$ is the only subquotient and there is nothing to prove. 
Otherwise, $\mathbb L$ is a summand  and has finite determinant. Thus, the proof of Corollary~\ref{cor:cyclo} applies and $\mathbb L$ is defined over a cyclotomic field. And then the proofs of 
 Corollary~\ref{cor:frob:N} and Section~\ref{sec:proof} remain the same when we do not consider the whole Betti moduli $M_B(X,r, [R_i]
 )$ but just $\mathbb L$ and its Galois conjugates $\mathbb L^\sigma$. 
 \end{proof}
 
 As a corollary we answer positively a question of Daniel Litt which he raised after seeing Theorem~\ref{thm:main}.
 
 \begin{cor}
 In Theorem~\ref{thm:subquotient} we may take $\mathbb V$ to be the Gauss-Manin system of a smooth projective family over $X$. 
 \end{cor}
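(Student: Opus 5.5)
To prove the corollary, I would check that the Gauss--Manin system meets the hypotheses placed on $\mathbb V$ in Proposition~\ref{prop:V}. Theorem~\ref{thm:subquotient} then applies verbatim.

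Let $g\colon Z\to X$ be smooth projective and put $\mathbb V=R^kg_*\mathbb Z\otimes\mathbb C$, or the sum over all $k$. The required properties are these:
\begin{itemize}
\item[(a)] $\mathbb V$ is semi-simple.
\item[(b)] $\mathbb V$ is defined over $\mathbb Z$.
\item[(c)] Its Gauss--Manin connection $(V,\nabla)$ admits a good model $(X_S,\mathcal V_S)$ as in Section~\ref{ss:good_mod}.
\item[(d)] For a dense set of closed points $s\in S$, one has $F^*\mathcal V|_{X_{W(s)}}\cong\mathcal V|_{X_{W(s)}}$.
\end{itemize}

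Property (a) is Deligne's semi-simplicity theorem \cite[Section~4.2]{Del71}. Property (b) holds by construction, since the monodromy preserves the lattice $R^kg_*\mathbb Z$ modulo torsion. For (c), spread $g$, $\bar X$ and $D$ out over a finitely generated $\mathbb Z$-algebra. After shrinking $S$, we may assume $g_S$ is smooth projective and the relative Hodge and de Rham cohomology sheaves are locally free and commute with base change. Then $\mathcal V_S=R^kg_{S*}\Omega^\bullet_{Z_S/X_S}$ with its Gauss--Manin connection is a model. Its regular singularities along $D$ and the finiteness of local monodromies needed in Proposition~\ref{prop:V} come from the monodromy theorem and Proposition~\ref{prop:finite_inf}. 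Alternatively, one passes to a finite étale cover of $X$ that makes the local monodromies unipotent, which does not affect the $p$-curvature conjecture.

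The main step is (d), the $F$-crystal property. I would proceed as follows. For a closed point $s$ of characteristic $p$, the crystalline base change theorem identifies $\mathcal V|_{X_{W(s)}}$, as a crystal on $X_s$, with $R^kg_{s,\mathrm{crys}*}\mathcal O_{Z_s/W(s)}$. The relative Frobenius of $Z_s/X_s$ induces a horizontal map $F^*\mathcal V\to\mathcal V$, and this map is an isogeny by crystalline Poincaré duality. This is Berthelot--Ogus and Katz \cite{Kat82}. So $\mathcal V$ is an $F$-crystal in the weak sense, namely an isomorphism after inverting $p$. I expect the main obstacle to be that Proposition~\ref{prop:V} is written with an equality $F^*\mathcal V=\mathcal V$. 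I would first check that its proof only uses this rationally: on $X_{K(s)}$, to deduce that the Cartier descent on $\bar Y_s$ has vanishing numerical Chern classes and is semistable, as in Corollary~\ref{cor:frob:N}. Failing that, I would restrict to primes $p>\dim Z_s+1$. There Fontaine--Laffaille/Mazur-type results make Frobenius divided by the Hodge filtration an integral isomorphism, which again suffices.

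Given (a)--(d), Proposition~\ref{prop:V} applies to $\mathbb V$. For an irreducible subquotient $\mathbb L$ whose boundary monodromies $[R_i]$ are not shared by any other subquotient, the Galois conjugates $\mathbb L^\sigma$ are again subquotients of $\mathbb V=\mathbb V^\sigma$, because $\mathbb V$ is defined over $\mathbb Z$. The Frobenius must therefore exchange the corresponding crystals as in Theorem~\ref{thm:arith_frob}. Theorem~\ref{thm:subquotient} then gives the $p$-curvature conjecture for $\mathbb L$.
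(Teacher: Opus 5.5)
Your overall route is the paper's. Its proof simply asserts that the Gauss--Manin connection defines an $F$-crystal on $X_{W(s)}$ for almost all $s$, cites \cite{Kat72} for $\mathbb V$ itself, and applies Theorem~\ref{thm:subquotient}; your (a)--(c) spell out what that sentence takes for granted. You are also right that there is a mismatch the paper passes over: Proposition~\ref{prop:V} is phrased with $F^*\mathcal V=\mathcal V$, whereas for a Gauss--Manin system Frobenius is only an isogeny. However, neither of your patches for (d) works as stated.

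\textbf{The Fontaine--Laffaille fallback cannot succeed.} An integral isomorphism of connections $F^*\mathcal V\cong\mathcal V$ on $X_{W(s)}$ forces $\mathcal V_s$ to have zero $p$-curvature, since $dF$ is divisible by $p$ and so $F^*\nabla\equiv\nabla_{\mathrm{can}}$ mod $p$. But by \cite{Kat72} the $p$-curvature of a Gauss--Manin system is given, on the conjugate graded pieces, by the Frobenius twist of the Kodaira--Spencer map. It is therefore nonzero for almost all $p$ as soon as the Hodge filtration is not flat. What strong divisibility actually gives is an isomorphism $F^*\big(\sum_i p^{-i}\mathrm{Fil}^i\big)\xrightarrow{\sim}\mathcal V$ out of a different lattice, which by Griffiths transversality is stable only under $p\nabla$.

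\textbf{The ``rational'' check also fails for the proof as written.} Corollary~\ref{cor:frob:N} reduces the integral identity $F^*\mathcal N'_{W(s)}=\mathcal N_{W(s)}$ modulo $p$ to obtain zero $p$-curvature and the Cartier descent $N'_s$; over $K(s)$ there is nothing to reduce.

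\textbf{How to repair it.} The integrality has to come from the hypothesis of the conjecture on $\mathbb L$.
\begin{itemize}
\item Zero $p$-curvature of $\mathcal E_{\mathbb L}$ mod $p$ gives an integral Frobenius descent $\mathcal E_{\mathbb L,W(s)}\cong F^*\mathcal B$, as in the proof of Theorem~\ref{thm:arith_frob}.
\item The rational $F$-structure on $\mathcal V$, together with the uniqueness of the boundary monodromies of $\mathbb L^{\sigma_p^{-1}}$ among the subquotients of $\mathbb V=\mathbb V^{\sigma_p^{-1}}$, then identifies $\mathcal B$ with the Galois conjugate. This identification holds only after inverting $p$.
\item You therefore need a specialisation step to carry semistability and vanishing Chern classes from $B_s$ to characteristic $0$: the saturation in $\mathcal B$ of a destabilising subsheaf of the generic fibre destabilises $B_s$.
\end{itemize}
After that, Simpson's theory, integrality and Lemma~\ref{lemma:finite} finish as in the paper. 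Likewise, Proposition~\ref{prop:finite_inf} yields finite local monodromy only for $\mathbb L$ and its conjugates, not for all of $\mathbb V$, but that is all that is used.
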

  \begin{proof}
  
  The Gauss-Manin connection on $X_{W(s)}$ defines an $F$-crystal for almost all $s$. 
  By \cite[Introduction]{Kat72}  or simply Proposition~\ref{prop:V},  it verifies the $p$-curvature conjecture. We apply Theorem~\ref{thm:subquotient} to conclude.

  \end{proof}


\begin{thebibliography}{BBDE04}

\bibitem[Chu85]{Chu85} Chudnovsky, D. V., Chudnovsky, G. V.  {\it  Applications of Padé approximations to the Grothendieck conjecture on linear differential equations},  Lecture Notes in Mathematics {\bf 1135}, 52–100. 
 
 \bibitem[Del70]{Del70} Deligne, P.: {\it 
Équations Diﬀérentielles à Points Singuliers Réguliers}, Lecture Notes in Mathematics {\bf 163} (1970), 136 pp.
\bibitem[Del71]{Del71} Deligne, P.: {\it Théorie de Hodge, II}, Publ. math. I.H.É.S. {\bf 40} (1971), 5--57.

\bibitem[Del73]{Del73} Deligne, P. {\it Les constantes des équations fonctionnelles des fonctions L}, Modular Functions of One Variable II: Proceedings International Summer School University of Antwerp, RUCA July 17–August 3, 1972. Berlin, Heidelberg : Springer Berlin Heidelberg, 1973. p. 501-597.



\bibitem[EG18]{EG18}  Esnault, H., Groechenig, M.: {\it Cohomologically rigid connections and integrality,} 
Selecta Mathematica {\bf 24} (5) (2018), 4279--4292.
\bibitem[EG20]{EG20} Esnault, H., Groechenig, M. {\it Rigid connections and  
F
 -isocrystals}, Acta Math. {\bf 225}(1), 103-158, (2020).

\bibitem[EG21]{EG21} Esnault, H, Groechenig, M.: {\it Frobenius structures and unipotent monodromy at infinity}, preprint 2021, 8 pages, {\url{https://page.mi.fu-berlin.de/esnault/preprints/helene/143_appPST.pdf}}, 
Appendix to 'André-Oort for Shimura varieties' by J. Pila, Ananth Shankar, J. Tsimerman, 25 pages. 

\bibitem[EG25]{EG25} Esnault, H., Groechenig, M.: {\it Crystallinity of rigid flat connections revisited,}
Selecta  {\bf 31}  (2025), No 1, Paper 2, 38 pp.

\bibitem[EK24]{EK24}  Esnault, H., Kerz, M.: {\it A non-abelian version of Deligne's Fixed Part Theorem}, preprint 2024, 23 pages
appears in Algebraic Geometry, {\url{https://page.mi.fu-berlin.de/esnault/preprints/helene/152_non_ab_Hodge.pdf}}





\bibitem[Fae24]{Fae24} Færgeman, J.: {\it Motivic realization of rigid $G$-local systems on curves and tamely ramified geometric Langlands}, {\url{https://arxiv.org/pdf/2405.18268}}.

\bibitem[FK09]{FK09} Farb  B., Kisin, M.:  {\it The Grothendieck-Katz conjecture of certain locally symmetric varieties},   IMRN {\bf 22} (2009),  4159--4167.
\bibitem[Kat72]{Kat72} Katz, N. {\it Algebraic Solutions of Diﬀerential Equations ($p$-curvature and the Hodge
Filtration)}, Invent. math. {\bf 18} (1972), 11-118.

\bibitem[Kat82]{Kat82}  Katz, N.: {\it A conjecture in the arithmetic of diﬀerential equations}, Bull. S.M.F.  {\bf 110}
(1982), 203--239.
\bibitem[Kat96]{Kat96} Katz, N.: {\it Rigid local systems}, Ann. of Math. Stud. {\bf 139}, Princeton University Proess, NJ, 1996, viii + 223 pp.

\bibitem[Kaw81]{Kaw81} Kawamata, Y.: {\it Characterization of abelian varieties}, Compos. Math. {\bf 43} (1981), 253--276.

\bibitem[Kos17]{Kos17} Koshikawa, T.: {\it Overconvergent unit-root $F$-isocrystals and isotriviality}, Mathematical Research Letters, 2017, vol. {\bf 24}, no 6, p. 1707-1727.

\bibitem[Mat02]{Mat02} Matsuki, K.: {\it Introduction to the Mori program}, Universitext, Springer Verlag 2002.


\bibitem[Pat14]{Pat14} Patrikis, S.:  {\it Pure Motives and Rigid Local Systems}, Notes Harvard 2014,  {\url{https://www.math.columbia.edu/~chaoli/docs/PureMotives.html#sec23}}.


 \bibitem[Sim92]{Sim92} Simpson, C.: {\it  Higgs bundles and local systems}, Publ. math. I.H.\'E.S. {\bf 75} (1992), 5--95.
\end{thebibliography}
\end{document}